\documentclass{amsart}
\usepackage{amssymb, latexsym, amsmath, amscd, epsfig, color}
\pdfoutput=1


\bibliographystyle{plain}

\theoremstyle{plain}
\newtheorem{theo}{Theorem}[section]
\newtheorem{lem}[theo]{Lemma}
\newtheorem{cor}[theo]{Corollary}
\newtheorem{prop}[theo]{Proposition}
\newtheorem{defi}[theo]{Definition}

\newtheorem*{pro}{Proposition}
\theoremstyle{definition}

\newcommand{\R}{\ensuremath{\mathbb{R}}}

\newcommand{\X}{\ensuremath{\mathbb{X}^n}}
\newcommand{\N}{\ensuremath{\mathbb{N}}}
\newcommand{\Hyp}{\ensuremath{\mathbb{H}}^n}
\newcommand{\E}{\ensuremath{\mathbb{E}^n}}
\newcommand{\cG}{\ensuremath{\mathcal{G}}}

\newcommand{\cZ}{\ensuremath{\mathcal{Z}}}
\newcommand{\cP}{\ensuremath{\mathcal{P}}}

\newcommand{\G}{\Gamma}
\newcommand{\A}{\mathcal{A}}

\newcommand{\LL}{\mathcal{L}}

\newcommand{\Vol}{\ensuremath{\operatorname{vol}}}
\newcommand{\bs}{\backslash}
\newcommand{\s}{\sigma}
\newcommand{\g}{\gamma}

\newcommand{\tg}{\widetilde{\gamma}}

\newcommand{\tr}{\mathrm{tr}}

\newcommand{\ux}{\underline {x}}
\newcommand{\uy}{\underline {y}}
\newcommand{\uz}{\underline {z}}

\newcommand{\dd}{\delta}
\newcommand{\e}{\varepsilon}
\newcommand{\vp}{\varphi}

\begin{document}

\title[Volume entropy of hyperbolic buildings]{Volume entropy of hyperbolic buildings}
\author{Fran\c cois Ledrappier}
 \address{Department of Mathematics,  255 Hurley Hall, University of Notre Dame, Notre Dame IN} \email{ledrappier.1@nd.edu}
\author{Seonhee Lim}
\address{Department of Mathematics, 310 Malott Hall, Cornell University, Ithaca NY
14853-4201} \email{slim@math.cornell.edu}
\subjclass[2000]{Primary 37D40; 20E42; 37B40 }
\keywords{building, volume entropy, volume growth, topological entropy, geodesic flow}

\date{July 4, 2009}

\begin{abstract} We characterize the volume entropy of a regular building as the topological pressure of the geodesic flow on an apartment. We show that the entropy maximizing measure is not Liouville measure for any regular hyperbolic building. As a consequence, we obtain a strict lower bound on the volume entropy in terms of the branching numbers and the volume of the boundary polyhedrons.

\end{abstract}

\maketitle
\section{Introduction}
The {\it volume entropy} of a Riemannian manifold $(X, g)$ is
defined as the exponential growth rate of volume of balls in the universal cover:
\[ h(g)=\underset{r \to \infty}{\lim} \frac{\ln(\Vol_{g}(B_{g}(x, r)))}{r},\]
where $x \in X$ is a basepoint in the universal cover $\widetilde{X}$ of $X$, and $B_{g}(x, r)$ is the
$\widetilde{g}$-metric ball of radius $r$ centered at $x$ in $\widetilde{X}$.

The volume entropy has been extensively studied for closed Riemannian manifolds. This seemingly coarse asymptotic invariant carries a lot of geometric informations: it is related to the growth type of the fundamental group $\pi_1(M)$ (\cite{Mil}), the Gromov's simplicial volume (\cite{Gro}),
 the bottom of the spectrum of
Laplacian (\cite{Led}), and the Cheeger isoperimetric constant (\cite{Bro}). If the space $(X,g)$ is compact and non-positively curved, the volume entropy is equal to the topological entropy of the geodesic flow (\cite{Man} for manifolds, \cite{Leu1} for buildings) as well as to the critical exponent of the fundamental group (for example, see \cite{Pic}). 

In this paper, we are interested in the volume entropy of buildings. 
Our initial motivation to study the volume entropy of buildings comes from the fact that classical Bruhat-Tits buildings are analogues of symmetric spaces for Lie groups over non-archimedean local fields. However, we consider hyperbolic buildings as well, which are Tits buildings but not Bruhat-Tits buildings. We consider Euclidean and hyperbolic buildings, which are unions of subcomplexes, called {\it apartments}, which are hyperbolic or Euclidean spaces tiled by a Coxeter polyhedron. Euclidean buildings include all classical Bruhat-Tits buildings. Hyperbolic buildings, especially their boundary or properties such as quasi-isometry rigidity or conformal dimension have been studied by Bourdon, Pajot, Paulin, Xie and others (\cite{Bou}, \cite{DO}, \cite{BouPaj}, \cite{HP}, \cite{Leu2}, \cite{Xie}, \cite{Vo}). Volume entropy of hyperbolic buildings has been studied by Leuzinger, Hersonsky and Paulin (\cite{Leu1}, \cite{HeP}).

We first characterize the volume entropy of a compact quotient $X$ of a regular building as the topological pressure of some function on a quotient of an apartment, which is roughly the exponential growth rate of the number of longer and longer geodesic segments (which are separated enough) in one apartment, counted with some weight function (see Theorem~\ref{th:pressure}). 
The dynamics of the geodesic flow of one apartment of a building is better understood than that of the building, which makes this characterization useful for the later parts of the paper.


There are two naturally defined measures on the boundary of the universal cover of a closed Riemannian manifold of negative curvature, which are the visibility measure and the Patterson-Sullivan measure. They correspond to invariant measures of the space of geodesics, namely the Liouville measure and  the Bowen-Margulis measure, respectively. 
Liouville measure can be thought of locally as the product of the volume form on the manifold and the canonical angular form on the unit tangent space. Bowen-Margulis measure is the measure which attains the maximum of measure-theoretic entropy, and it can be thought of as the limit of average of all the Lebesgue measures supported on longer and longer closed geodesics in the given closed manifold.  Katok made a conjecture that Liouville measure and Bowen-Margulis measure coincide if, and only if, the metric is locally symmetric and he showed it for surfaces (\cite{Kat}).

 
Bowen-Margulis measure associated to  a compact quotient of a building $\Delta$ is defined as follows. Since $\Delta$ is a CAT(-1) metric space, there is a unique Patterson-Sullivan measure on the boundary of $\Delta$ and Sullivan's construction yields a unique geodesic flow invariant probability measure $m_{BM}$ on the space of geodesics of $X$. (Here by a geodesic, we mean an isometry from $\R$ to $X$, i.e. a marked geodesic.) This measure is ergodic and realizes the topological entropy (see \cite{Ro}). We call it the Bowen-Margulis measure.  Another family of measures invariant under the geodesic flow of the building is the family of measures proportional to  Liouville measure on the unit tangent bundle of each apartment of $\Delta$. We will say that a measure $\mu$ \textit{projects to Liouville measure} if $\mu$ projects to one of these measures.

Our main result is that Bowen-Margulis measure does not project to Liouville measure for any regular hyperbolic building. This result is unexpected for buildings of constant thickness starting from a regular right-angled hyperbolic polygon, since they have a very symmetric topological structure around links of vertices, and they are built of symmetric spaces for which Liouville measure coincides with Bowen--Margulis measure. In retrospect, one possible explanation is that buildings, even the most regular ones, correspond to manifolds of variable curvature rather than to locally symmetric spaces.

Remark that if we vary the metric on $X$ to a non-hyperbolic metric, Bowen--Margulis measure might project to Liouville measure. If this happens for the metric of minimal volume entropy, it would extend the result of R. Lyons (\cite{Lyo}) on combinatorial graphs which are not regular or bi-regular: in terms of metric graphs, Lyons constructed some examples of graphs, including all (bi-)regular graphs with the regular metric,   for which Liouville measure and Bowen-Margulis measure are in the same measure class. Interpreted with the characterization of entropy minimizing metric on graphs (\cite{Lim}) these examples of metric graphs minimize the volume entropy among all metric graphs with the same combinatorial graph. In \cite{Lim}, the second author showed that the metric minimizing volume entropy is determined by the valences of the vertices, and in particular, it is not ``locally symmetric" (i.e. not all edges have the same length) if the graph is not regular or bi-regular.

The characterization we use to show the main result enables us to compute explicitly the maximum of the entropies of the measures projecting to Liouville measure. Consequently, we obtain a  lower bound of volume entropy for compact quotients of a regular hyperbolic building in terms of purely combinatorial data of the building, namely the thickness, and the volume of the panels  of the quotient complex. 
\vspace{.1 in}

\noindent Now let us state our results more precisely.

\subsection{Statements of results}\label{sec:first}
\vspace{-.05 in}Let $P$ be a Coxeter polyhedron, i.e., a convex polyhedron, either in $\mathbb{H}^n$ or in $\mathbb{R}^n$, each of whose dihedral angle is of the form $\pi/m$ for some integer $m \geq 2$. Let $(W,S)$ be the \textit{Coxeter system} consisting of the set $S$ of reflections of $\X$ with respect to the faces of codimension $1$ of $P$, and the group $W$ of isometries of $\X$ generated by $S$. It has the following finite presentation:
\[ W= \left< s_i : s_i ^2=1, (s_i s_j)^{m_{ij}}=1 \right>,\]
where $m_{ii}=1$, $m_{ij} \in \N \cup\{\infty\}$. 
Let $\Delta$ be a hyperbolic or Euclidean regular building of type $(W, S)$, equipped with the symmetric metric (i.e. metric of constant curvature) induced from that of $P$. 

If $\Delta$ is a {\it right-angled} building (i.e. all the dihedral angles are $\pi/2$), for a given family of positive integers $\{ q_i \}$, there exists a unique building of type $(W,S)=(W(P),S(P))$ up to isometry such that the number of chambers adjacent to the $(n-1)$-dimensional face $F_i$, called the \textit{thickness of $F_i$}, is $q_i+1$ \cite{HP}. 

 The building $\Delta$ is equipped with a family of subcomplexes, called {\it apartments}, which are isometric to tessellations of $\mathbb{H}^n$ or $\mathbb{R}^n$ by $P$. For a fixed chamber $C$ and an apartment $\A$ containing it, there is a {\it retraction} map $\rho : \Delta \to \A$, whose restriction to each apartment containing $C$ is an isometry. 
 
Let $X = \Gamma \backslash \Delta$ be a compact quotient of $\Delta$, which is a polyhedral complex whose chambers are all isometric to $P$. We are interested in the volume entropy $h_{\Vol}(X)$ of $X$. It is easy to see that $h_{\Vol}(X)$ is positive if the building is thick (i.e. $q_i+1 \geq 3, \forall i$), as the entropy is bounded below by that of an embedded tree of degree at least $3$. 

Let us fix a fundamental domain $\widehat{X}$ in $\Delta$. Let us fix a chamber $C$ contained in $\widehat{X}$, an apartment $\A$ of $\Delta$ containing $C$, and a retraction map $\rho: \Delta \to \A$ centered at a chamber $C$. Since the Coxeter group $W$ is virtually torsion free (for example by Selberg's Lemma and Tit's theorem, see \cite{Dav} page 440), there is a finite index torsion-free subgroup $W'$ of $W$ such that $Y=W' \backslash \A$ is a compact quotient of an Euclidean or hyperbolic space (i.e. a manifold rather than a complex of groups).
Let $T^{1}(Y)$ be the unit tangent bundle of $Y$. The set of  $(n-1)$-dimensional faces of $\A$ is $W'$-invariant and projects to a totally geodesic subset $\LL$ of $Y$. In particular, the unit tangent bundle  $T^{1}\LL$ is  a finite union of closed codimension one subsets of $T^{1} Y$ which are invariant  under the geodesic flow.
Let $\tilde {v} $ be a vector in $T^{1} \A$ and $v$ its projection on $T^{1}Y$. We  define a  weight function $f$ such that the number $\int_0^T f(\varphi_s (v)) ds$ is approximatively the number of geodesic segments in $\Delta$ starting at $\tilde {v}$ and of length $T$. Let us first define two functions $q$ and $l$ on $T^1Y$. Let $v$ be in $T^{1}Y \setminus T^{1}\LL$ and denote $\gamma _v$ the  geodesic in $T^{1}(Y)$ with initial vector $v$. By abuse of notation, let us denote the lift of $\gamma_v$ in $\A$ embedded in the building $\Delta$ by $\tg _v$. Let $q(v)+1$ be the thickness of the $(n-1)$-dimensional face that the geodesic $\tg _v$ intersects last before or at time zero, and let $l(v)$ be the distance between two points of the faces that $\tg _v$ meets just before and after time zero. Define $q$ and $l$ analogously on $T^{1}\LL$ (see Definition~\ref{def:l} for a precise formulation).

\begin{theo}\label{th:pressure}  Let $X$ be a compact quotient of a regular Euclidean or hyperbolic building. Let $Y$ be a compact quotient of an apartment defined as above, and let $q$ and $l$ be the functions defined as above.  Denote by $h_{\mu}$ the measure-theoretic entropy  of a measure $\mu$ invariant under the geodesic flow (on the unit tangent bundle of $Y$). Then,
\[ h_{\Vol}(X)= \sup_{\mu} \left \{ h_\mu(\varphi) +  \int_{T^{1}Y} \frac{\ln q }{l} d\mu \right \}. \]
\end{theo}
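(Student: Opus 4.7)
The plan is to reduce the volume-counting problem in $\Delta$ to a topological pressure problem on $T^1Y$, and then apply the variational principle. The retraction $\rho:\Delta\to\A$ centered at $C$ preserves distances from points of $C$, so
\[
\Vol(B_\Delta(x,T))=\int_{B_\A(x,T)}\#\rho^{-1}(y)\,dy,
\]
and for generic $y\in\A$ the fiber size equals the product $\prod_i q_i$ of thicknesses of the walls successively crossed by the $\A$-geodesic from $x$ to $y$, recording the combinatorial freedom of lifting the gallery back to $\Delta$ chamber by chamber.

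To dynamicize this product, let $\tau_0<\tau_1<\cdots$ denote the successive wall-crossing times along the geodesic $\tg_v$ and set $q_i=q(\varphi_{\tau_i}v)$, $l_i=\tau_{i+1}-\tau_i=l(\varphi_{\tau_i}v)$. Since $\phi:=\ln q/l$ is piecewise constant on each $[\tau_i,\tau_{i+1})$ with value $\ln q_i/l_i$, the branching product along a segment of length $T$ is comparable to $\exp(S_T\phi(v))$, where $S_T\phi(v)=\int_0^T\phi(\varphi_s v)\,ds$. Because the wall structure is $W$-equivariant, $q$, $l$ and $\phi$ descend to $T^1 Y=W'\bs T^1\A$. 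Writing the volume integral in polar coordinates and reassembling over $W'$-translates then yields
\[
\Vol(B_\Delta(x,T))\asymp\int_0^T\!\int_{T^1Y}J(s)\,e^{S_s\phi(v)}\,dv\,ds,
\]
with $J(s)$ the underlying Riemannian Jacobian in the apartment.

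The exponential growth rate of this weighted integral is identified with the topological pressure $P(\phi)$ of the geodesic flow $\varphi$ on $T^1Y$ by a $(T,\varepsilon)$-separated orbit argument --- precisely the informal claim, made just before the theorem statement, that $h_{\Vol}(X)$ is the exponential rate of separated geodesic segments in one apartment counted with the weight $\prod q_i$. Once this is established, the variational principle for topological pressure gives
\[
P(\phi)=\sup_\mu\left\{h_\mu(\varphi)+\int_{T^1Y}\phi\,d\mu\right\},
\]
which is the claimed identity.

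The main obstacle is that $\phi=\ln q/l$ is unbounded and discontinuous on the singular locus $T^1\LL$ of wall-tangent vectors: as $v$ approaches $T^1\LL$, $l(v)\to 0$ and $\phi(v)\to+\infty$, and on $T^1\LL$ itself the definitions of $q$ and $l$ through adjacent wall crossings degenerate. The heart of the argument is to verify that (i) geodesics that spend too much time near walls contribute only subexponentially to $\Vol(B_\Delta(x,T))$, so that the singular set does not affect either growth rate, and (ii) the variational principle extends to this unbounded potential, typically by truncating $\phi$, passing to the limit by monotone convergence, and approximating an arbitrary invariant measure by measures giving zero mass to $T^1\LL$. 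Controlling these near-wall contributions, geometrically for the volume count and dynamically for the pressure, is the technical core of the proof.
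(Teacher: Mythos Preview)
Your route is genuinely different from the paper's. You attempt a direct volume-to-pressure computation: push the ball in $\Delta$ down to $\A$ via the retraction, recognize the fiber count as $\exp(S_T\phi)$ with $\phi=\ln q/l$, identify the growth rate of the resulting weighted integral with the topological pressure $P(\phi)$ on $T^1Y$, and then invoke the variational principle for pressure. The paper instead never computes volumes directly. It starts from Leuzinger's identity $h_{\Vol}(X)=h_{\mathrm{top}}(\cG(X),\varphi_t)$, applies the ordinary variational principle on the \emph{building} to get $h_{\Vol}(X)=\sup_m h_m$ over invariant measures on $\cG(X)$, and then projects $\pi:\cG(X)\to\cZ=W\backslash T^1\X$ and applies the \emph{relativized variational principle} of Ledrappier--Walters:
\[
\sup_m h_m \;=\; \sup_\mu\Big\{h_\mu + \int_{\cZ} h_{\mathrm{top}}\big(\pi^{-1}(z),\varphi_t\big)\,d\mu(z)\Big\}.
\]
The fiber entropy $h_{\mathrm{top}}(\pi^{-1}(z),\varphi_t)$ is then computed as $\limsup_T\frac{1}{T}S_Tf(z)$ for a \emph{bounded} auxiliary function $f$ (a tent-smoothing of the wall-crossing weights), and one checks the uniform bound $|S_Tf-S_T(\ln q/l)|\le C$, so that $f$ and $\ln q/l$ have the same integral against every invariant measure. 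The passage from $\cZ$ to the finite cover $T^1Y$ is then immediate.

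What the paper's route buys is precisely the avoidance of the obstacle you flag at the end. Because the fiber over any $z$ is a discrete tree with bounded branching per unit time, the fiber-entropy function is automatically bounded; the unbounded $\ln q/l$ enters only at the level of integrals against invariant measures, where the essential-cohomology estimate is elementary. Your approach, by contrast, must make rigorous sense of the topological pressure of an unbounded, discontinuous potential and match it to a volume growth rate. Your proposed fixes (truncation plus monotone convergence for (ii), subexponential control of near-wall geodesics for (i)) are plausible, but you have only named them, not executed them; in particular, showing that an arbitrary invariant $\mu$ can be approximated by measures giving small mass to $T^1\LL$ while simultaneously controlling $h_\mu$ and $\int\phi\,d\mu$ is delicate when $\phi$ blows up on $T^1\LL$. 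The paper's introduction of the bounded surrogate $f$ is exactly the device that makes these difficulties evaporate, and is the idea your sketch is missing.
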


\noindent \textit{Remark.} This theorem holds for more general hyperbolic metrics that we can consider on a ``metric" (regular) building (imagine varying the shape of each copy of $P$ for example), namely when the resulting compact quotient $X = W\backslash \Delta$ has a convex fundamental domain of $X$ in $\Delta$ which is contained in one apartment. However, in this paper, we restrict ourselves to the given metric on the building (which  is given by the metric on $P$).

\vspace{.1 in}
For the proof of Theorem \ref{th:pressure}, recall that by \cite{Leu1}, $h_{\Vol}(X)$  is given by the maximum  of the metric entropies $h_m$ of invariant probability measures for the geodesic flow on $\cG(X)$. We will associate to each invariant probability measure $m$ on $\cG(X)$  a measure $\tau(m)$ on $T^{1}Y$ which is invariant under the geodesic flow. Using the  relativized variational principle (\cite{LW}), we can show that,  for any ergodic measure $\mu $ on $T^{1} Y$
\[  \underset{\tau (m) = \mu }{\sup} \{ h_m \}  \; = \;  h_\mu(\varphi) +  \int_{T^{1}Y} \frac{\ln q }{l} d\mu. \]
Theorem \ref{th:pressure} follows.
Moreover, since the Bowen-Margulis measure $m_{BM}$ achieves the maximum of the entropy,  it follows from the relativized variational principle that the maximum in the formula from Theorem \ref{th:pressure} is achieved by $\mu = \tau(m_{BM})$. Another application of the formula follows from the computation of the integral when  the measure $\mu$ is the Liouville measure $\mu_L$ on $T^{1}Y$. We have:

\begin{prop}\label{prop:santalo}
Let $P$ be a convex polyhedron in $\X$, either hyperbolic or Euclidean. Let us denote the Liouville measure by $m_L$. Then
$$\int_{T^{1}(P)} \frac{\ln q}{l} dm_L = c_{n} \underset{F}{\sum} \ln q(F) \Vol(F),$$
where $c_n$ is the volume of the unit ball in $\mathbb{E}^n$ and
where the sum is over the set of $(n-1)$-dimensional faces of $P$.
In particular, if $q$ is a constant, then
$$\int \frac{\ln q}{l} dm_L = c_n \ln q \Vol(\partial P),$$
 where $\Vol(\partial P)$ is the $(n-1)$--volume of the boundary of $P$.
\end{prop}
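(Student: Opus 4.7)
The identity is a direct application of Santaló's formula for the unit tangent bundle of a convex domain, specialized to the weight $\ln q / l$. The plan is to foliate $T^{1}(P)$ by chords of $P$ (i.e.\ by geodesic flow orbits entering through one $(n-1)$-face and exiting through another) and parametrize these chords by their entering unit vectors on $\partial P$.

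First, I would observe that for a chord $\{\vp_t v_0 : 0\le t\le \tau(v_0)\}$ of $P$ starting at an inward-pointing unit vector $v_0$ based at a point of an $(n-1)$-face $F_{\text{in}}$ and exiting through a face $F_{\text{out}}$ at time $\tau(v_0)$, the functions $q$ and $l$ are \emph{constant along the chord}. Indeed, for $v=\vp_t v_0$ with $0<t<\tau(v_0)$, the geodesic $\tg_v$ intersects $F_{\text{in}}$ last before time $0$ (at time $-t$) and $F_{\text{out}}$ just after time $0$ (at time $\tau(v_0)-t$), so $q(v)=q(F_{\text{in}})$ and $l(v)=\tau(v_0)$.

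Next I would invoke Santaló's formula: for a convex domain $D$ with piecewise-smooth boundary in $\X$,
$$\int_{T^{1}D}F\,dm_L \;=\; \int_{\partial^{\text{in}}T^{1}D} \cos\theta(v_0)\left(\int_0^{\tau(v_0)} F(\vp_t v_0)\,dt\right) dA(x)\,d\sigma(v_0),$$
where $\theta(v_0)$ is the angle between $v_0$ and the inward normal, $dA$ is the boundary area measure and $d\sigma$ is the spherical measure. The set of inward vectors whose chord meets a codimension $\ge 2$ face of $P$ has measure zero, so the formula applies. Plugging $F=\ln q/l$ in and using the constancy noted above, the inner time integral collapses to $\ln q(F_{\text{in}}(v_0))$. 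Decomposing the boundary integral over the $(n-1)$-faces,
$$\int_{T^{1}(P)} \tfrac{\ln q}{l}\,dm_L \;=\; \sum_F \ln q(F)\,\Vol(F)\,\int_{S^+}\cos\theta\,d\sigma,$$
where $S^+$ is the inward unit hemisphere above a point of $F$. The hemispherical integral is a classical constant, computed by one line of polar decomposition to be the $(n-1)$-volume of the unit ball in the tangent hyperplane to $F$, i.e.\ the constant $c_n$ of the proposition. The constant-thickness case then follows by factoring $\ln q$ out and using $\sum_F \Vol(F) = \Vol(\partial P)$.

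The only real obstacle is bookkeeping: one must make sure Santaló's formula is stated with the correct boundary measure in both the Euclidean and hyperbolic settings (it holds identically in both, since Liouville measure is preserved by the geodesic flow in each), and that the measure-zero set of vectors whose chord grazes a lower-dimensional face of $P$ contributes nothing, so that the decomposition of $T^{1}(P)$ into chords is justified almost everywhere.
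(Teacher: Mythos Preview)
Your argument is correct and is essentially the same as the paper's: both invoke Santal\'o's formula $dm_L = (v\cdot\mathbf n(q))\,dq\,dv\,dt$ on $T^1(P)$, use that $\ln q/l$ is constant along each chord so the $t$-integral collapses, decompose the boundary integral over the panels, and identify the remaining hemispherical integral $\int_{S^+}\cos\theta\,d\sigma$ with the constant $c_n$. Your write-up is slightly more careful about the constancy along chords and the measure-zero grazing set, but the method is identical.
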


\begin{cor}\label{cor:lowerbound} [Lower bound for entropy] The volume entropy $h_{\Vol}(\Delta)$ of a regular hyperbolic building $\Delta$ of type $(W(P),S(P))$  is bounded below by

\[ h_{\Vol} (\Delta)  \; \geq \; (n-1) + \frac{1}{\Vol(P)} \underset{F :\; \mathrm{face \; of\; P}}{\sum} \ln q(F) \Vol(F). \]
The volume entropy $h_{\Vol}(\Delta)$ of a regular Euclidean building $\Delta$ of type $(W(P),S(P))$  is bounded below by
\[ h_{\Vol} (\Delta)  \; \geq \; \frac{1}{\Vol(P)} \underset{F :\; \mathrm{face \; of\; P}}{\sum} \ln q(F) \Vol(F). \]

\end{cor}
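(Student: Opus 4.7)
The plan is to apply Theorem~\ref{th:pressure} with $\mu$ equal to the normalized Liouville probability measure $\mu_L$ on $T^{1}Y$. This immediately gives a lower bound on $h_{\Vol}(X) = h_{\Vol}(\Delta)$, so it remains only to evaluate the two terms $h_{\mu_L}(\varphi)$ and $\int_{T^{1}Y} (\ln q)/l \; d\mu_L$ explicitly.

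For the integral term, I would transfer Proposition~\ref{prop:santalo} from $T^{1}P$ to $T^{1}Y$. Since $W'$ is torsion-free of finite index $N := [W : W']$ in $W$, the compact manifold $Y = W' \backslash \A$ is tiled by $N$ isometric copies of $P$ up to the wall locus $\LL$, which is of codimension one and thus has zero Liouville measure. Writing $m_L$ for the unnormalized Liouville measure, summing over the $N$ copies of $P$ gives
$$\int_{T^{1}Y} \frac{\ln q}{l}\, dm_L \;=\; N\, c_n \sum_{F} \ln q(F)\, \Vol(F), \qquad m_L(T^{1}Y) \;=\; N\, c_n \Vol(P),$$
so the factor $N c_n$ cancels in passing to $\mu_L = m_L / m_L(T^{1}Y)$, and one obtains
$$\int_{T^{1}Y} \frac{\ln q}{l}\, d\mu_L \;=\; \frac{1}{\Vol(P)} \sum_{F} \ln q(F)\, \Vol(F).$$

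For the entropy term, I would invoke the classical computation of the metric entropy of the Liouville measure. In the hyperbolic case, $Y$ is a compact quotient of $\Hyp$, which is a rank-one symmetric space, so Liouville measure coincides with the Bowen--Margulis measure of maximal entropy and $h_{\mu_L}(\varphi) = n-1$. In the Euclidean case, the geodesic flow on the flat manifold $Y$ has vanishing topological entropy, hence $h_{\mu_L}(\varphi) = 0$. Substituting each of these values, together with the integral computed above, into the variational inequality of Theorem~\ref{th:pressure} yields the two stated lower bounds.

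The one point requiring some care is bookkeeping: one must verify that the constants $c_n$ and $N$ arising from Proposition~\ref{prop:santalo} cancel exactly against the normalization used to make $\mu_L$ a probability measure, and that the non-smooth wall locus $T^{1}\LL \subset T^{1}Y$, on which the functions $q$ and $l$ are defined separately, indeed carries no Liouville mass and so contributes nothing to the integral. Once these two routine checks are in place, Theorem~\ref{th:pressure} applied to $\mu_L$ gives the corollary in both the hyperbolic and Euclidean regimes.
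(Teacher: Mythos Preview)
Your proposal is correct and follows essentially the same approach as the paper: plug the normalized Liouville measure $\mu_L$ into the variational formula of Theorem~\ref{th:pressure}, use the known entropy values $n-1$ (hyperbolic) and $0$ (Euclidean), and evaluate the integral by summing Proposition~\ref{prop:santalo} over the $N=[W:W']$ copies of $P$ tiling $Y$, then dividing by $m_L(T^{1}Y)$. The paper's own argument (the paragraph immediately following the statement of Corollary~\ref{cor:lowerbound}) differs only cosmetically, expressing the chamber count as the ratio $\Vol(T^{1}Y)/\Vol(T^{1}P)$ rather than as the group index.
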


Indeed, for the normalized Liouville measure $ \frac{1}{\Vol(T^1Y)}  m_L$, the entropy is $(n-1)$ for hyperbolic space ($0$ for Euclidean space, respectively) and the integral of $\frac{\ln q}{l} $ is $\frac{c_n}{\Vol(T^1Y)} \underset{F :\; \mathrm{face \; of\; P}}{\sum} \ln q(F) \Vol(F) $ times the number of $n$-dimensional faces in $Y$ (Proposition \ref{prop:santalo}). This number of $n$-dimensional faces in $Y$ is exactly $\frac {\Vol (Y)}{\Vol (P)} = \frac {\Vol (T^{1}Y)}{\Vol (T^{1}P)}$. Corollary \ref{cor:lowerbound} follows by reporting in the formula in Theorem \ref{th:pressure}. 

\

Now let $\Delta$ be a regular hyperbolic building. This includes for example Bourdon's buildings (a building $\Delta$ is called a \textit{ Bourdon's building} if $P$ is a regular hyperbolic right-angled polygon). Properties such as quasi-isometry rigidity or conformal dimension of Bourdon's buildings (and more generally Fuchsian buildings) have been studied by Bourdon, Pajot, Xie, and others (\cite{Bou}, \cite{BouPaj}, \cite{Xie}).
Using  the above theorem, we show that the entropy maximizing measure does not project to Liouville measure and obtain a strict lower bound as a consequence.
\begin{theo}\label{th:Liouville} Let $X$ be a compact quotient of a regular hyperbolic building of type $(W(P),S(P))$.
Then Bowen-Margulis measure does not project to the Liouville measure on $T^1(P)$. Consequently, the following strict inequality holds:
\[ h_{\Vol} (X) >(n- 1) + \frac{1}{\Vol (P)} \sum_{F} \ln q (F) \Vol(F), \]
where the sum is over all panels of the polyhedron $P$.
\end{theo}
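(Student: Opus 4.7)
The plan is to use Theorem~\ref{th:pressure} together with uniqueness of equilibrium states for the Anosov geodesic flow on $T^{1}Y$. Set $F := \ln q / l$. Theorem~\ref{th:pressure} says that $h_{\Vol}(X)$ is the topological pressure of $F$ for this flow, and the argument leading to Corollary~\ref{cor:lowerbound} shows that $h_\mu(\varphi) + \int F\, d\mu$ evaluated at the normalized Liouville measure $m_L$ produces exactly the lower bound displayed in the statement. Thus both conclusions of the theorem --- that $\tau(m_{BM}) \neq m_L$ and that the inequality in the lower bound is strict --- amount to the single assertion that $m_L$ is \emph{not} an equilibrium state for $F$.

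I would argue by contradiction. Assume $m_L$ is an equilibrium state for $F$. Since $Y$ is a compact hyperbolic manifold, its geodesic flow is Anosov and $m_L$ is the unique measure of maximal entropy, hence the unique equilibrium state for the zero potential. By the standard uniqueness theory (via Livshits' theorem), two H\"older potentials sharing an equilibrium state differ by a constant plus a continuous coboundary. So $F$ would be cohomologous to a constant $c$, i.e., $\int F\, d\mu = c$ for every $\varphi$-invariant probability measure $\mu$.

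To derive a contradiction, I would exhibit two periodic orbits of $\varphi$ in $T^{1}Y$ with distinct $F$-averages. For a closed geodesic $\gamma$ of period $T$ transverse to the wall set $T^{1}\LL$, the integral $\int_0^T F(\varphi_t v)\, dt$ reduces to $\sum_k \ln q(F_k)$ summed over the codimension-one faces $F_k$ of $\A$ successively crossed in one period by the lift $\tg$ --- because on each of the $N(\gamma)$ straight segments between wall crossings $F$ is constant equal to $\ln q(F_k)/l_k$, where $l_k$ is the segment length. The $F$-average on $\gamma$ is then $\sum_k \ln q(F_k)/T$, which one can vary by selecting closed geodesics with different wall-crossing patterns --- for instance one whose lift oscillates between two close walls versus one whose lift crosses sparsely; when every $q_i$ equals a common $q$, the average reduces to $(\ln q)\cdot N(\gamma)/T$, visibly orbit-dependent.

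The main obstacle is the low regularity of $F$: it is piecewise smooth with jumps along $T^{1}\LL$, so Livshits' theorem does not apply verbatim. I would address this by one of two routes: (i) approximate $F$ by H\"older potentials $F_\e$ obtained by smoothing across a shrinking neighborhood of $T^{1}\LL$, apply uniqueness in the approximation, and pass to the limit using continuity of the pressure under uniform convergence together with the fact that every ergodic $\varphi$-invariant probability measure gives zero mass to $T^{1}\LL$; (ii) bypass uniqueness altogether via a direct perturbation of $m_L$, reweighting mass toward orbits with short $l$ to gain strictly more in $\int F\, d\mu$ than is lost in entropy. Either route produces the required contradiction and completes the proof.
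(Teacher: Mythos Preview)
Your overall strategy --- assume $m_L$ is an equilibrium state for $F=\ln q/l$, deduce that $F$ is cohomologous to a constant, and contradict this on periodic orbits --- is the paper's strategy as well. The genuine gap is the regularity step, and neither of your proposed routes closes it. Route~(i) breaks in several places. First, $F$ is unbounded ($l\to 0$ when a geodesic grazes a codimension-two face), so no uniform approximation by H\"older $F_\e$ exists, and ``continuity of the pressure under uniform convergence'' does not apply. Second, even after replacing $F$ by the bounded essentially-cohomologous $f$, there is a logical hole: your hypothesis is that $m_L$ is equilibrium for $F$, not for $F_\e$, so you cannot invoke uniqueness at the level of $F_\e$. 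Third, the assertion that every ergodic invariant measure gives zero mass to $T^1\LL$ is false: $T^1\LL$ is a finite union of flow-invariant codimension-one submanifolds and carries many invariant measures. More fundamentally, the paper records (via a referee remark) that \emph{no} continuous function on $T^1Y$ can be cohomologous to $F$: the associated cross-ratio on $\partial\Hyp$ takes values in a countable set dictated by wall combinatorics, so any smoothing changes the cohomology class. Route~(ii) is too vague to assess; constructing an explicit perturbation that gains more in $\int F$ than it loses in entropy is not visibly easier than the problem itself.

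The paper's way around the regularity obstacle is to leave $T^1Y$ and pass to the symbolic section $(\Sigma,\sigma)$ of wall-crossing sequences, where the functions $Q$ and $L$ \emph{are} H\"older for the shift metric. In dimension~$2$ this system is a finite-to-one factor of a Bowen--Series subshift of finite type, Bowen's criterion applies, and the contradiction is obtained on the one-parameter family of closed geodesics in the classes $[A^kB]$: the log-multiplicity $\ln M(\gamma_k)$ is eventually affine in $k$ while $l(\gamma_k)=\arccosh\bigl(\tfrac12\tr(A^kB)(A^kB)^t\bigr)$ is not. In dimension $\geq 3$ no explicit Markov coding adapted to the wall structure is available, so the paper instead compares Patterson--Sullivan and Lebesgue conditionals on local stable leaves to produce the transfer function $u$ directly (but only almost everywhere), and derives the contradiction from the cross-ratio on $\partial\Hyp$ taking countably many values on a full-measure set. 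Your periodic-orbit heuristic (``$N(\gamma)/T$ is visibly orbit-dependent'') is the right intuition but does not by itself replace these arguments.
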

 
 The proof of Theorem \ref{th:Liouville} amounts to showing that the Liouville measure on $T^1(Y)$ cannot realize the maximum because it cannot be an equilibrium measure for the function $\frac {\ln q}{l}$. One might be tempted to use criteria from thermodynamical formalism for this problem but the function $\frac{\ln q}{l} $ is neither continuous nor bounded. We will replace the function $\frac {\ln q}{l}$ by a function $f$ which is bounded and Lipschitz-continuous outside of the singular set $T^1(\LL)$. Since the measure $\tau (m_{BM})$ is ergodic, either it is supported by $T^1(\LL)$ and in that case, $\tau (m_{BM})$ cannot be the Liouville measure, or it is supported on the regular set. In dimension 2, there is a Markov coding for the regular set and we can use arguments from thermodynamical formalism on that invariant set. For the higher dimensional hyperbolic case, the geodesic flow is Anosov, thus admits abstract Markov codings (\cite{Bow1}, \cite{Rat}), but  the arguments are more delicate since there is no known explicit Markov coding adapted to the regular set.

The paper is organized as follows. After recalling necessary background, we define pressure of a measurable function and prove some of its properties in Section ~\ref{s:background}. In Section ~\ref{s:characterization}, We give some characterization of volume entropy of compact quotients of general regular buildings, namely Theorem~\ref{th:pressure} and another characterization which is analogous to that of graphs. In Section  ~\ref{s:lowerbound}, we show Proposition~\ref{prop:santalo}, and Theorem~\ref{th:Liouville} restricting ourselves to hyperbolic buildings. 


\section{Preliminaries}\label{s:background}




\subsection{Buildings} \label{ss:building}
In this section we recall definitions and basic properties of Euclidean and hyperbolic buildings. See \cite{GP} and
the references therein for details.

Let  $P$ be a Coxeter polyhedron in $\X$, where  $\X$ is $\Hyp$, $\mathbb{S}^n$ or $\E$ (with its standard metric of constant curvature $-1$, $1$  and $0$, respectively).  It is a compact, convex regular polyhedron each of whose dihedral angle is of the form $\pi/m$ for some integer $m \geq 2$. Let $(W,S)$ be the \textit{Coxeter system} consisting of the set $S$ of reflections of $\X$ with respect to the $(n-1)$-dimensional faces of $P$, and the group $W$ of isometries of $\X$ generated by $S$. It has the following finite presentation:
\[ W= < s_i : s_i ^2=1, (s_i s_j)^{m_{ij}}=1 >,\]
where $m_{ii}=1, m_{ij} \in \N \cup \{ \infty \}.$ 

A \textit{polyhedral complex $\Delta$ of type $(W,S)=(W(P),S(P))$} is a CW-complex such that there exists a morphism of
CW-complexes, called a \textit{function type}, $\tau : \Delta \to P$, for which its restriction to any
maximal cell is an isometry.
\begin{defi}{\rm [building]}\label{def:building}
Let $(W,S)$ be a Coxeter system of $\X$. A {\rm building} $\Delta$ of type $(W,S)$ is a polyhedral
complex of type $(W,S)$, equipped with a maximal family of subcomplexes, called {\rm apartments}, polyhedrally
isometric to the tessellation of $\X$ by $P$ under $W$, satisfying the following axioms:

\begin{enumerate}
\item{} for any two cells of $\Delta$, there is an apartment containing them,
\item{} for any two apartments $\A, \A'$, there exits a polyhedral isometry of $\A$ to $\A'$ fixing $\A \cap
\A'$.
\end{enumerate}
\end{defi}
A building is called {\it hyperbolic, spherical, and Euclidean (or affine)} if $\X$ is $\Hyp$, $\mathbb{S}^n$, $\E$, respectively.
The {\it link} of a vertex $x$ is a $(n-1)$-dimensional spherical building, whose vertices are the edges of $\Delta$ containing $x$,
and two vertices (two edges of $\Delta$) are connected by an edge if there is a $2$-dimensional cell containing both
edges of $\Delta$, etc. In dimension 2, the link of a vertex is a bipartite graph of diameter $m$ and girth $2m$, where $\pi/m$ is the dihedral angle at the vertex. 

The building $\Delta$ is a $CAT(\kappa)$-space, with $\kappa$ the curvature of $\X$ and its links are $CAT(1)$-spaces.

Cells of maximal dimension are called \textit{chambers}. Cells of dimension $(n-1)$ (i.e. interesections of two chambers) are called \textit{panels}.
For any panel $F$ of $P$, let $q(F)+1$ be its \textit{thickness}, i.e. the number of chambers containing it.
A building is called {\it regular} if the thickness depends only on the function type. A building is said to be {\it thick} if $q(F)+1 \ge 3$ for all $F$.

For a fixed chamber $C$ and an apartment $\A$ containing it, there exists a map $\rho: \Delta \to \A$, called the \textit{retraction map from $\Delta$ onto $\A$ centered at $C$}. It fixes $C$ pointwise, and its restriction to any apartment $\A'$ is an isometry fixing $\A \cap \A'$. 

\vspace{.1 in} \noindent {\textbf{Examples.}} Examples of Euclidean buildings of dimension $1$ include any infinite locally finite tree $(T,d)$ without terminal vertices with a combinatorial metric $d$. 
Products of locally finite trees are naturally Euclidean buildings.
Classical examples of buildings are Bruhat-Tits buildings, which are analogues of symmetric spaces for Lie groups over non-archimedean local fields. These are Euclidian buildings, and in dimension $2$, the polygon $P$ is either a triangle or a rectangle. 

By a theorem of Vinberg \cite{V}, the dimension of a hyperbolic building of type $(W,S)$ is at most 30.
Examples of hyperbolic buildings include Bourdon's buildings which are hyperbolic buildings of dimension $2$ with all its dihedral angles $\pi/2$. It implies that the link of any vertex is a  complete bipartite graph. 

More generally, a $2$-dimensional regular hyperbolic building is called a Fuchsian building. In dimension $\geq 3$, there exist uncountably many non-isomorphic hyperbolic buildings with some given polyhedron $P$ \cite{HP}. Hyperbolic buildings also appear in Kac-Moody buildings \cite{Rem}.
There are also right-angled non-hyperbolic buildings (\cite{Dav}). 

\subsection{Volume entropy and topological entropy} \label{sec:topentropy}
In this section, we recall the fact that the volume entropy is equal to the topological entropy of the geodesic flow for hyperbolic and Euclidean buildings  (\cite{Man}, \cite{Leu1}). We also recall the  Variational Principle, which will be used in Section~\ref{sec:pressure}.

Let $X$ be a compact quotient of a building $\Delta$ of type $(W(P), S(P))$. Let $h_{\Vol}(g)$ be the volume entropy of $(X,g)$ defined in the introduction, where $\Vol_{g}(S)$ of a subset $S \subset \Delta$ is the piecewise Riemannian volume, i.e.  the sum of the volume of $S \cap C$ for each chamber $C$ which is a polyhedron in $\X$.

 The entropy $h_{\Vol}(g)$ does not depend on the base point $x$. It satisfies the homogeneity
property $ h_{\Vol} (\alpha g) = \frac{1}{\sqrt{\alpha}} h_{\Vol} (g),$
 for every $\alpha>0$ if the dimension of $\Delta$ is at least two. It is easy to see that for any thick building, either hyperbolic or Euclidean, the volume
 entropy is positive, as it contains a tree of degree at least $3$ along a geodesic.
 
 \vspace{.1 in}
 \noindent Let us recall the topological entropy of the geodesic flow of $(X,g)$. The space $(X,g)$ is geodesically complete and locally uniquely geodesic. Let $\cG (X)$ be the set of all geodesics of $X$, i.e. the set of isometries from $\R$ to $X$. The geodesic flow $\vp_t$ on $\cG (X)$ is defined by $\g \mapsto \vp_t(\g)$, where $\vp_t(\g)(s)=\g(s+t).$ We define a metric $d_\cG$ on $\cG (X)$ by
\[ d_\cG(\g_1, \g_2)= \int_{-\infty}^{\infty} d(\g_1(t), \g_2(t)) \frac{e^{-|t|}}{2} dt. \]
The metric space $(\cG (X), d_\cG)$ is compact by Arzela-Ascoli theorem.
Define a family of new metrics on $\cG(X)$:
\[ d_{T} (x, y) = \underset{ 0 \leq t < T}{\max} d(\vp_t(x), \vp_t (y)). \]
Intuitively, it measures the distance between the orbit segments $\{ \vp^t (x)\; |\; 0 \leq t < T \}$. We say that a set $S \subset \cG(X)$ is
$(T, \e)$-\textit{separated} ($(T,\e)$-\textit{spanning}) if any two points in $S$ are of $d_{T}$-distance at least $\e$ (if any point of $\cG(X)$ is in $\e$-neighborhood of $S$ in the metric $d_{T}$, respectively). As $T$
increases or $\e$ decreases, the cardinality of maximal $(T, \e)$-separated (minimal $(T,\e)$-spanning) sets increase. Let us denote the maximal cardinality
of $(T,\e)$-separated set and the minimal cardinality of $(T,\e)$-spanning set by $N^{\Delta}_d(\vp, \e, T)$ and $S^{\Delta}_{d}(\vp, \e, T)$, respectively.

\begin{defi}\label{def:topent}{\rm [topological entropy]}
The topological entropy of the geodesic flow $\vp_t$ is defined as follows:
$$ h_{\mathrm{top}}(X, \vp_t) := \underset{\e \to 0}{\lim} \;\underset{ T \to \infty}{\limsup} \frac{1}{T} \ln N^{\Delta}_d(\vp, \e, T).$$
Equivalently,
$$ h_{\mathrm{top}}(X, \vp_t) := \underset{\e \to 0}{\lim} \;\underset{ T \to \infty}{\limsup} \frac{1}{T} \ln S^{\Delta}_d(\vp, \e, T).$$
\end{defi}

\begin{theo}{\rm[\cite{Leu1}]}\label{thm:Enrico}
 The volume entropy $h_{\Vol}(X)$ is equal to the topological entropy of the
geodesic flow for any Hadamard space $X$, in particular any compact quotient $X$ of a hyperbolic or Euclidian
building. 
\[h_{\Vol}(X) = h_{\mathrm{top}}(X, \vp_t).\]
\end{theo}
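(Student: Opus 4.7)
The plan is to prove both inequalities between $h_{\Vol}(X)$ and $h_{\mathrm{top}}(X, \vp_t)$, adapting Manning's argument for non-positively curved manifolds to the building setting. The three structural facts that make the adaptation go through are: $\Delta$ is CAT$(\kappa)$ with $\kappa \leq 0$, so the function $t \mapsto d_\Delta(\tg_1(t), \tg_2(t))$ between any two geodesics is convex; every geodesic segment extends to a bi-infinite geodesic, by geodesic completeness of $\Delta$; and the volume of a small metric ball in $\Delta$ admits a uniform positive lower bound $v_0(\varepsilon)$, since inside a single chamber such a ball is an ordinary Euclidean or hyperbolic ball. Fix a compact fundamental domain $\widehat{X} \subset \Delta$ of diameter $D$, a base point $\tilde{x}_0 \in \widehat{X}$, and write $\pi : \Delta \to X$ for the quotient projection.

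For the upper bound $h_{\mathrm{top}} \leq h_{\Vol}$, take a $(T,\varepsilon)$-separated set $\cS \subset \cG(X)$ and lift each $\g \in \cS$ to $\tg$ with $\tg(0) \in \widehat{X}$. Unwinding the integral definition of $d_\cG$, the $(T,\varepsilon)$-separation produces for each pair a time $s \in [-M, T+M]$, with $M = M(\varepsilon)$ a uniform constant, at which $d_\Delta(\tg_1(s), \tg_2(s)) \geq c\varepsilon$. CAT$(0)$ convexity of $s \mapsto d_\Delta(\tg_1(s), \tg_2(s))$ on $[-M, T+M]$, together with $d_\Delta(\tg_1(0), \tg_2(0)) \leq D$, forces this convex function to attain its maximum at one of the endpoints, so the map $\g \mapsto (\tg(-M), \tg(T+M))$ has image which is $(c\varepsilon)$-separated in the sup product metric and contained in $B(\tilde{x}_0, M+D) \times B(\tilde{x}_0, T+M+D)$. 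A volume count gives $|\cS| \leq \Vol(B(\tilde{x}_0, M+D))\Vol(B(\tilde{x}_0, T+M+D))/v_0(c\varepsilon/2)^2$; taking $\log$, dividing by $T$, and sending $T \to \infty$ yields the inequality.

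For the lower bound $h_{\Vol} \leq h_{\mathrm{top}}$, pick a maximal $\varepsilon$-separated set $Y$ in the annulus $B(\tilde{x}_0, T) \setminus B(\tilde{x}_0, T-1)$, whose cardinality grows at the same exponential rate as $\Vol(B(\tilde{x}_0, T))$. For each $y \in Y$ take the unit-speed geodesic $\tg_y$ with $\tg_y(0) = \tilde{x}_0$ passing through $y$, extend it arbitrarily to a bi-infinite geodesic, and set $\g_y := \pi \circ \tg_y \in \cG(X)$. I would then show that if $d_T(\g_y, \g_{y'}) \leq \varepsilon'$ for $\varepsilon' = \varepsilon'(\varepsilon)$ sufficiently small, a continuity-plus-discreteness argument produces $\alpha \in \G$ with $d_\Delta(\alpha \tg_{y'}(t), \tg_y(t)) \leq \varepsilon'$ for every $t \in [0,T]$: evaluation at $t = 0$ confines $\alpha$ to a finite subset $F \subset \G$ by properness of the $\G$-action, and evaluation at $t \approx d(\tilde{x}_0,y)$ places $\alpha y'$ inside a bounded-radius ball of $y$. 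Since $Y$ is $\varepsilon$-separated, each such $(\alpha, \text{ball})$ class absorbs only a uniformly bounded number of elements of $Y$, so a subset of $Y$ of cardinality at least $|Y|/C$ becomes $(T,\varepsilon')$-separated in $\cG(X)$, proving the inequality.

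The chief obstacle is this lower-bound step: branching of the building makes the extension of $\tg_y$ beyond $y$ non-unique, but since only the restriction $\tg_y|_{[0,T]}$ matters for the $d_T$-distance and this restriction is uniquely determined by $\tilde{x}_0$ and $y$ via CAT$(0)$, the argument is insensitive to the choice of extension. Combining the two inequalities gives the equality $h_{\Vol}(X) = h_{\mathrm{top}}(X, \vp_t)$.
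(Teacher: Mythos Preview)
The paper does not prove this theorem at all: it is stated with a citation to Leuzinger \cite{Leu1} and used as a black box, so there is no ``paper's own proof'' to compare your attempt against. Your sketch follows Manning's original strategy for nonpositively curved manifolds, which is exactly the approach Leuzinger adapts to Hadamard spaces; in that sense your route is the expected one, and the structural inputs you single out (CAT$(0)$ convexity of $t\mapsto d(\tg_1(t),\tg_2(t))$, geodesic completeness, and a uniform lower volume bound for small balls) are the right ones.

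One point deserves more care. In the lower-bound step you pass from $d_T(\g_y,\g_{y'})\le\varepsilon'$ to the existence of $\alpha\in\Gamma$ with $d_\Delta(\alpha\tg_{y'}(t),\tg_y(t))\le\varepsilon'$ for all $t\in[0,T]$. Recall that here $d_T$ is built from the integral metric $d_\cG$, not from the pointwise distance $d_\Delta$, so you cannot read off a uniform bound on $d_\Delta(\tg_y(t),\alpha\tg_{y'}(t))$ directly; you first need the easy estimate $d_\Delta(\g_1(s),\g_2(s))\le C\, d_\cG(\vp_s\g_1,\vp_s\g_2)$ coming from the shape of the weight $e^{-|t|}$. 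With that inserted, your argument goes through. Your remark about the non-unique extension beyond $[0,T]$ is also slightly off for the same reason: the extension does enter $d_\cG$, but its contribution is uniformly bounded and washes out after dividing by $T$.
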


Let us denote the measure-theoretic entropy of a  $\vp _t$-invariant probability measure $m$ for the transformation $\vp _1$ by $h_m$ (see \cite{Wal} for definition of measure-theoretic entropy).
The following theorem gives us a fundamental relation between measure-theoretic entropy and topological entropy. 
\begin{theo}{\rm[Variational principle (\cite {Wal}, page 218)]}\label{thm:variational}
\[
 h_{\mathrm{top}}(X, \vp_t) \; = \; \underset{ m \in \mathcal{M}(\vp_{t}) }{\sup} \{ h_m  \} 
 \]
where $\mathcal{M}(\vp_t)$ is the set of all $\vp_t$-invariant probability measures on $\cG(X)$. 
\end{theo}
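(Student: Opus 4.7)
The plan is to reduce the flow statement to the discrete-time variational principle for the time-one map $\vp := \vp_1$ acting on the compact metric space $(\cG(X), d_\cG)$, and then run the classical two-sided argument. Both sides descend well to discrete time: from the definition of $d_T$ and uniform continuity of $(t,\g)\mapsto\vp_t\g$ one gets $h_{\mathrm{top}}(X,\vp_t)=h_{\mathrm{top}}(\vp)$, while Abramov's theorem gives $h_m(\vp_t)=h_m(\vp)$ for every $\vp_t$-invariant probability $m$. It therefore suffices to prove $h_{\mathrm{top}}(\vp)=\sup_{m\in\M(\vp_t)}h_m(\vp)$.

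For the inequality $h_m(\vp)\le h_{\mathrm{top}}(\vp)$, given $\e>0$ I would pick a finite Borel partition $\xi$ of $\cG(X)$ of diameter less than $\e$ with $m(\partial\xi)=0$ (produced as a Voronoi tessellation around a countable dense set with radii chosen outside a countable exceptional set). Each cell of the refinement $\xi_n:=\bigvee_{j=0}^{n-1}\vp^{-j}\xi$ has $d_n$-diameter less than $\e$, hence contains at most one point of any $(n,\e)$-separated set; thus
\[ H_m(\xi_n)\le\log\#\xi_n\le\log N_d^\D(\vp,\e,n). \]
Dividing by $n$, letting $n\to\infty$ and then $\e\to 0$ yields $h_m(\vp)\le h_{\mathrm{top}}(\vp)$.

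For the reverse inequality, for each $\e>0$ and each $n$ I would choose a maximal $(n,\e)$-separated set $E_n$ of size $N_d^\D(\vp,\e,n)$ and form the empirical and orbit-averaged measures
\[ \s_n=\frac{1}{|E_n|}\sum_{x\in E_n}\delta_x, \qquad \mu_n=\frac{1}{n}\sum_{k=0}^{n-1}\vp^k_*\s_n. \]
Weak-$*$ compactness on $\cG(X)$ yields a subsequence $\mu_{n_j}\to\mu$, and $\mu$ is automatically $\vp$-invariant. Picking a partition $\xi$ with $\mathrm{diam}(\xi)<\e$ and $\mu(\partial\xi)=0$, the separation property forces $H_{\s_n}(\xi_n)=\log|E_n|$. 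Writing $n=aL+r$ with $0\le r<L$ and splitting the orbit support of $\s_n$ into $a$ consecutive windows of length $L$, Misiurewicz's combinatorial estimate gives
\[ \frac{\log|E_n|}{n}\le\frac{1}{L}H_{\mu_n}(\xi_L)+\frac{L\log\#\xi}{n}. \]
Passing to the limit along $n_j$ (using $\mu(\partial\xi)=0$ to secure $H_{\mu_{n_j}}(\xi_L)\to H_\mu(\xi_L)$), then $L\to\infty$ and $\e\to 0$, produces $h_{\mathrm{top}}(\vp)\le h_\mu(\vp)$.

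The main obstacle is the Misiurewicz step in the second inequality: one must convert the trivial identity $H_{\s_n}(\xi_n)=\log|E_n|$ for the \emph{non-invariant} empirical measure $\s_n$ into a bound involving $H_{\mu_n}(\xi_L)$ for the orbit-averaged (hence almost-invariant) measure $\mu_n$, while keeping the combinatorial error proportional to $L\log\#\xi/n$ rather than losing a term comparable to $\log|E_n|$. The accompanying subtlety, easier but essential, is the choice of partition with $\mu$-null boundary, which is what allows weak-$*$ convergence of measures to lift to convergence of partition entropies in the limit.
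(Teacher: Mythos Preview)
The paper does not prove this theorem: it is quoted as a classical result with a reference to Walters' textbook (\cite{Wal}, p.~218) and then used as a black box. So there is no in-paper argument to compare against; your sketch is essentially the standard Goodwyn--Misiurewicz proof that one would find in that reference.

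That said, your easy direction contains a genuine error. From ``each cell of $\xi_n$ has $d_n$-diameter less than $\e$'' you correctly deduce that any $(n,\e)$-separated set meets each cell in at most one point; but this yields $N_d^\Delta(\vp,\e,n)\le\#\xi_n$, the \emph{reverse} of the inequality you then write. The displayed bound $H_m(\xi_n)\le\log\#\xi_n\le\log N_d^\Delta(\vp,\e,n)$ therefore does not follow, and in fact no purely counting argument of this type can work: Goodwyn's inequality $h_m\le h_{\mathrm{top}}$ needs a different mechanism (e.g.\ comparing $\xi$ with a finite open cover and using the conditional-entropy estimate $h_m(\vp,\xi)\le h_m(\vp,\xi')+H_m(\xi\mid\xi')$, as in Walters \S8.2).

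A smaller gap in the hard direction: the weak-$*$ limit $\mu$ you build is only $\vp_1$-invariant, not flow-invariant, so it need not lie in $\mathcal{M}(\vp_t)$. The standard repair is to replace $\mu$ by $\overline{\mu}=\int_0^1(\vp_s)_*\mu\,ds$, which is flow-invariant and has the same $\vp_1$-entropy since each $\vp_s$ is a measure-preserving automorphism commuting with $\vp_1$.
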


\subsection{Relativized Variational Principle}

Let $\pi : \cG (X) \to \cZ$ be a continuous map onto a compact space $\cZ$ and assume that there is a  one-parameter flow $\phi _t, t \in \R,$ of homeomorphisms of $\cZ$ such that $\pi \circ \vp _t = \phi _t \circ  \pi.$ For each point $ z\in \cZ$, $\pi ^{-1}(z) $ is a closed subset of $\cG (X).$  In the same way, for each $\phi $-invariant probability measure  $\mu$ on $\cZ$, there is a nonempty closed convex set, which we denote by $\mathcal{M}_\mu(\vp_t)$, of invariant probability measures $m$ on $\cG (X)$ such that $\pi _{\star} m  = \mu.$ The Relativized Variational Principle computes the maximum, over the convex set $\mathcal{M}_\mu(\vp_t)$, of the entropies $h_m $ in terms of the  entropy $h_{\mu}$ of the measure $\mu $ with respect to the transformation $\phi _1$ and the topological entropies of the non-invariant sets $\pi ^{-1} (z)$. The definition of the topological entropy of a non-invariant set is a direct extension of the definition of the topological entropy of the whole set:
let $A$ be a closed subset of $\cG (X)$ and let us denote the maximal cardinality
of a $(T,\e)$-separated subset of $A$ and the minimal cardinality of  a $(T,\e)$-spanning subset  of $A$ by $N^{\Delta}_d(\vp, \e, T, A)$ and $S^{\Delta}_{d}(\vp, \e, T, A)$, respectively.

\begin{defi}\label{def:topentsub}{\rm [topological entropy of a closed subset]}\cite{Bo3}
The topological entropy of the closed subset $A$ for the geodesic flow $\vp_t$ is defined as follows:
$$ h_{\mathrm{top}}(A, \vp_t) := \underset{\e \to 0}{\lim} \;\underset{ T \to \infty}{\limsup} \frac{1}{T} \ln N^{\Delta}_d(\vp, \e, T, A).$$
Equivalently,
$$ h_{\mathrm{top}}(A, \vp_t) := \underset{\e \to 0}{\lim} \;\underset{ T \to \infty}{\limsup} \frac{1}{T} \ln S^{\Delta}_d(\vp, \e, T, A).$$
\end{defi}

Observe that the topological entropy of a finite set is zero. In general, we have:
\begin{theo}{\rm [Relativized Variational Principle (\cite{LW})]} \label{thm:RVP}
With the above notations, for all $\phi$ invariant probability measure $\mu$ on $\cZ$, 
\[ \underset{ m \in \mathcal{M}_\mu(\vp_{t}) }{\sup} \{ h_m \} \; = \; h_{\mu} + \int _{\cZ} h_{\mathrm{top}} (\pi ^{-1} (z), \vp _t ) d\mu (z). \]
\end{theo}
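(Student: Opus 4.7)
The plan is to establish the two inequalities $\leq$ and $\geq$ separately, following the scheme of the Ledrappier--Walters proof of the relativized variational principle for homeomorphisms, and then extending to flows via the standard suspension trick (or by working directly with the time-one map $\varphi_1$).

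For the upper bound, I would fix any $m \in \mathcal{M}_\mu(\varphi_t)$ and work with the time-one map. Using Rokhlin's disintegration, write $m = \int m_z \, d\mu(z)$ with $m_z$ concentrated on $\pi^{-1}(z)$. The Abramov--Rokhlin decomposition of entropy along the factor $\pi$ gives
\[ h_m(\varphi_1) \;=\; h_\mu(\phi_1) \;+\; h_m(\varphi_1 \mid \pi^{-1}(\mathcal{B}_\cZ)), \]
where $\mathcal{B}_\cZ$ is the Borel $\sigma$-algebra of $\cZ$. The main task is then to bound the conditional term by $\int h_{\mathrm{top}}(\pi^{-1}(z),\varphi_t)\, d\mu(z)$. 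This I would do by a Brin--Katok style argument applied fiberwise: the conditional entropy along the partition into fibers equals the $\mu$-integral of the pointwise limit of $-\frac{1}{n}\log m_z(B_n(x,\e))$, and by covering $\pi^{-1}(z)$ with $(n,\e)$-Bowen balls one gets the inequality against the fiber topological entropy. A uniform choice of $\e$ and a dominated convergence argument then produces the stated bound.

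For the lower bound, one must construct, for each $\delta > 0$, an invariant $m$ with $\pi_\star m = \mu$ whose entropy is at least $h_\mu + \int h_{\mathrm{top}}(\pi^{-1}(z))\, d\mu - \delta$. After reducing to the case of ergodic $\mu$, the idea is to pick, for large $T$ and $\mu$-a.e.\ $z$, a nearly maximal $(T,\e)$-separated subset $E_z \subset \pi^{-1}(z)$, of cardinality at least $e^{T(h_{\mathrm{top}}(\pi^{-1}(z))-\delta)}$. Then form empirical measures of the type $\frac{1}{T|E_z|} \sum_{x \in E_z} \int_0^T \delta_{\varphi_s(x)}\, ds$ and integrate them against $\mu$ on $\cZ$. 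Taking a weak-$*$ limit point, invoking upper semi-continuity of entropy with respect to the factor $\mathcal{B}_\cZ$, and combining the fiber contribution with $h_\mu(\phi_1)$ via Abramov--Rokhlin gives the bound.

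The main obstacle is the measurable selection needed in the lower bound: the separated sets $E_z$ must be chosen jointly measurably in $z$ so that the resulting empirical measure is itself a legitimate Borel probability measure on $\cG(X)$, and this requires a Kuratowski--Ryll-Nardzewski type selection theorem applied to the multifunction $z \mapsto \{\text{near-maximal $(T,\e)$-separated subsets of }\pi^{-1}(z)\}$. A secondary subtlety is that $h_{\mathrm{top}}(\pi^{-1}(z),\varphi_t)$ itself must be shown $\mu$-measurable (as the upper limit of $\frac{1}{T}\log N^\Delta_d(\varphi,\e,T,\pi^{-1}(z))$ along a countable sequence of $\e \to 0$), so that the integral on the right-hand side is well-defined. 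Once these measurability points are handled, the entropy comparisons on either side are comparatively routine.
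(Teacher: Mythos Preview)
The paper does not give its own proof of this statement: Theorem~\ref{thm:RVP} is simply quoted from \cite{LW} as a known result, with no argument supplied. So there is no in-paper proof to compare your proposal against.

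That said, your sketch is a reasonable outline of the Ledrappier--Walters argument itself. The upper bound via the Abramov--Rokhlin entropy decomposition plus a fiberwise Bowen-ball covering is the standard route, and the lower bound via empirical measures built from near-maximal separated sets in fibers is exactly the construction in \cite{LW}. Your identification of the two technical issues---measurable selection of the separated sets $E_z$ and measurability of $z \mapsto h_{\mathrm{top}}(\pi^{-1}(z),\varphi_t)$---is accurate; these are precisely the points where care is needed. One minor remark: the original \cite{LW} is formulated for a single continuous transformation rather than a flow, so strictly speaking one applies it to $\varphi_1$ and $\phi_1$ (as you indicate), and the passage to the flow statement is immediate since $h_m(\varphi_1)$ and $h_\mu(\phi_1)$ are by definition the flow entropies used here.
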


\subsection{Pressure of a measurable function}

Let $Z$ be a measurable  space and  let $T$ be a measurable transformation of $Z$  ($\vp _t$ one-parameter group of measurable transformations of $Z$, respectively). Denote $\mathcal {M} (Z, T)$ ($\mathcal {M} (Z, \vp) $) the set of $T$-invariant ($\vp$-invariant, respectively) probability measures on $Z$. We assume that $\mathcal {M}(Z, T)$ ($\mathcal{M}(Z, \vp)$) is non empty.  Let $f: Z \to \R$ be a measurable bounded function. We define the pressure  $\cP_Z (f, T) $ ($\cP _Z (f, \vp )$, respectively) as follows.
\begin{defi}\label{def:pressure}{\rm [pressure of a measurable function]}
\begin{align*}
\cP_Z(f,T) \; &= \; \underset {m \in \mathcal {M} (Z,T )} \sup \{ h_m + \int f dm \}\\ ( \cP_Z(f, \vp) \;& = \; \underset {m \in \mathcal {M} (Z,\vp )} \sup \{ h_m + \int f dm \} , \textrm{respectively}).
\end{align*}
\end{defi}
In the case when the function $f$ is continuous, there are direct definitions of the pressure by putting suitable weights on the orbits in the definition \ref{def:topent}. The Variational Principle extends to the result that this topological quantity is indeed the pressure $\cP_Z(f,T)$. Let us denote it by $\cP(f)$ if there is no ambiguity. 
\begin{defi}\label{def:newpressure}{\rm [essentially cohomologous functions]} 
Two measurable functions   $f$  and $g$ are called {\rm essentially cohomologous}  if their integrals coincide with respect to any $\varphi_t$-invariant measure $\mu$:
\[ \int_{Z} f d\mu = \int_{Z} gd\mu. \]
\end{defi}
For such a pair of functions $f, g$, the pressure given by definition \ref{def:pressure} clearly coincide:
\[ \mathcal{P}(f) = \underset{\mu \in \mathcal{M}(\varphi_{t})}{\sup} \{ h_\mu + \int_{\cZ} f d\mu\} = \underset{\mu \in \mathcal{M}(\varphi_{t})}{\sup} \{ h_\mu + \int_{\cZ} g d\mu\} = \cP (g)\]
where the supremum is taken over all $\varphi_t$-invariant Borel probability measures.

This notion of pressure behaves handily with suspensions. Let $(Z,T)$ be a measurable transformation and $l: Z \to  \R$ a positive measurable function on $Z$, called the ceiling function, we define the suspension flow as follows:
\begin{enumerate}
\item Consider the space $\overline {Z} := Z \times [0, \infty] / \sim$, where $(x, l(x)) \sim (\sigma_A(x) , 0)$.
\item Consider the transformation $\vp_t : (x, s) \mapsto (x, s+t)$.
\item Then we can show that any $\vp_t$-invariant probability measure on $\overline Z$ is of the form $d\mu \times dt / \int l d\mu$ for a $T$-invariant measure $\mu$.
\end{enumerate}

\begin{prop}\label{prop:suspension}
Let $(Z,T)$ be a measurable transformation and assume that $\mathcal {M}(Z,T)$ is not empty. Let $l$ be a positive measurable function, bounded away from zero and infinity, from $Z$ to $\R$ and  consider the suspension  $(\overline {Z}, \vp) $ as above. For  $ {f}$ a nonnegative bounded function on $\overline {Z}$, define $f_Z$ a function on $Z$ by $$f_Z (x) \; = \; \int _0^{l(x)} f(x,s) ds.$$
 Then, 
 
 \begin{enumerate}
\item $\cP_{\overline Z} (f, \vp)$ is given by the unique number $t_0$ such that $\cP_Z (f_Z -t_0 l) = 0. $
 \item A probability measure $\mu$ on $Z$ realizes the maximum of $h(\mu) + \int (f_Z -t_0 l) d\mu$ if, and only if, the measure $dm= \frac{d\mu \times dt}{\int l d\mu}$ realizes the maximum of $h(m)+ \int f dm$.
 \end{enumerate}
 \end{prop}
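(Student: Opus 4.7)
The plan is to reduce both assertions to Abramov's formula for the metric entropy of a suspension, combined with Fubini and the bijection between invariant measures asserted in item (3) above the statement. Given $\mu \in \mathcal{M}(Z,T)$, let $m_\mu := (d\mu \times dt)/\int l\, d\mu$ be the corresponding $\varphi$-invariant probability on $\overline Z$; the map $\mu \mapsto m_\mu$ is a bijection onto $\mathcal{M}(\overline Z, \varphi)$. First I would assemble two ingredients: (i) Abramov's formula $h(m_\mu) = h(\mu)/\int l\, d\mu$, and (ii) the Fubini identity $\int_{\overline Z} f\, dm_\mu = (\int_Z f_Z\, d\mu)/\int l\, d\mu$. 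Both use the hypothesis that $l$ is bounded away from $0$ and $\infty$. Adding them gives
$$h(m_\mu) + \int f\, dm_\mu \;=\; \frac{h(\mu) + \int f_Z\, d\mu}{\int l\, d\mu},$$
and taking the supremum over $\mu \in \mathcal{M}(Z,T)$, which coincides under the bijection with the supremum over $m \in \mathcal{M}(\overline Z, \varphi)$, yields
$$\cP_{\overline Z}(f,\varphi) \;=\; \sup_{\mu} \frac{h(\mu) + \int f_Z\, d\mu}{\int l\, d\mu}.$$

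For part (1), I would consider $F(t) := \cP_Z(f_Z - t\, l,\, T)$. Since $l \geq c > 0$ and both $f_Z$, $l$ are bounded, for $t' > t$ and any invariant $\mu$ one has $\int (f_Z - t'\, l)\, d\mu \leq \int (f_Z - t\, l)\, d\mu - c(t'-t)$, hence $F(t') \leq F(t) - c(t'-t)$. Thus $F$ is continuous, strictly decreasing, and tends to $\mp\infty$ as $t \to \pm\infty$, so there is a unique $t_0$ with $F(t_0) = 0$. The identity $\cP_Z(f_Z - t_0 l) = 0$ unwinds to $\sup_\mu \{h(\mu) + \int f_Z\, d\mu - t_0 \int l\, d\mu\} = 0$, which is equivalent (since $\int l\, d\mu > 0$) to $\sup_\mu (h(\mu) + \int f_Z\, d\mu)/\int l\, d\mu = t_0$, establishing $\cP_{\overline Z}(f,\varphi) = t_0$.

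For part (2), the same rearrangement gives the identity
$$h(\mu) + \int (f_Z - t_0\, l)\, d\mu \;=\; \Bigl(h(m_\mu) + \int f\, dm_\mu - t_0\Bigr) \int l\, d\mu,$$
and since $\int l\, d\mu > 0$, this shows that $\mu$ realizes the supremum in $\cP_Z(f_Z - t_0\, l) = 0$ if and only if $h(m_\mu) + \int f\, dm_\mu = t_0 = \cP_{\overline Z}(f,\varphi)$, i.e. if and only if $m_\mu$ realizes the maximum in the variational principle defining $\cP_{\overline Z}(f,\varphi)$.

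The main technical point is justifying Abramov's entropy formula (i) for the merely bounded measurable $f$ and $l$ considered here: the formula is classical for continuous ceiling functions, but in the present measurable generality one should either cite the standard measurable version of Abramov's theorem or reprove it via the Rokhlin entropy formula, using the standing hypothesis that $l$ is bounded away from $0$ and $\infty$ to guarantee that all denominators are positive and that conditional entropies are finite. Once (i) is in hand, the remaining steps are the purely algebraic manipulations sketched above.
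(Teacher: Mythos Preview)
Your proposal is correct and follows essentially the same route as the paper: both use the bijection $\mu\mapsto m_\mu$, Abramov's formula (the paper simply cites \cite{Abr}), and Fubini to obtain $h(m_\mu)+\int f\,dm_\mu=(h(\mu)+\int f_Z\,d\mu)/\int l\,d\mu$, and then pass to the supremum. The only cosmetic difference is in the uniqueness argument for $t_0$: the paper shows directly that any $t$ with $\cP_Z(f_Z-tl)=0$ must equal $\cP_{\overline Z}(f,\varphi)$, whereas you argue via strict monotonicity of $t\mapsto\cP_Z(f_Z-tl)$; both are standard and equivalent.
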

 \begin{proof} By the definition of the pressure,
$$\mathcal{P}_{\overline {Z}}(f, \vp) = \underset{m}{\sup} \{ h_m + \int_{\overline {Z}} f dm \},$$ where the supremum is taken over all
Borel probability measures $m$ which are $\vp$-invariant on $\overline {Z}$. Any
$\vp$-invariant probability measure $m$ is of the form $$ m = \frac{\mu \times ds}{\int_{Z} l d\mu},$$ where $\mu$ is a $T$-invariant probability measure on $Z$, and $h_{m} = h_{\mu}/\int_{Z} l d\mu$ (\cite{Abr}).
It follows that
\begin{align*}
\mathcal{P}_{\overline {Z}}(f,\vp) &= \underset{\mu}{\sup} \{\frac{ h(\mu)}{\int_{Z} l d\mu} + \int_{Z} 
\int_{0}^{l}  f(x,s)\frac{d\mu}{\int_Z l d\mu} ds \} \\
&=\underset{\mu}{\sup} \{\frac{ h(\mu)}{\int_{Z} l d\mu} + \int_{Z} f_Z {\frac{d\mu}{\int l\; d\mu}} \}.
\end{align*}

Thus, $$\frac{h(\mu) + \int f_Z d\mu}{\int_Z l d\mu} \leq \cP _{\overline {Z}} (f, \vp) $$ i.e., $h_\mu+\int_Z  f_Z d\mu - \mathcal{P}_{\overline{Z}}(f, \varphi) \int_Z l d\mu \leq
0$ and the equality is attained by the supremum. Therefore $\mathcal{P}_Z (f_Z  - \cP_{\overline{Z}}(f,\vp) l)=0$. Conversely, suppose that a
positive constant $t$ satisfies $\mathcal{P}_Z(f_Z - t l)=0$, i.e., $\sup_{\mu} \{ h(\mu) + \int f_Z d\mu - t \int l d\mu\}=0.$
Then it follows that $$\underset{\mu}{\sup} \; \frac{h(\mu)+ \int f_Z d\mu}{\int l d\mu} =t ,$$ thus $t=\cP_{\overline Z}(f, \vp)$. 
\end{proof}



\section{Some characterizations of volume entropy}\label{s:characterization}



In this section, we show Theorem~\ref{th:pressure}. Recall that the volume entropy of the building is the topological entropy of the geodesic flow, which is the exponential growth rate of the number of longer and longer geodesic segments (that are separated enough) in the building. Theorem~\ref{th:pressure} tells us that we can count geodesic segments in any regular building by counting them in one apartment, each weighted by the number of preimages in the building under the retraction map.

As a consequence, we also show that
the entropy of a regular hyperbolic or Euclidian building is characterized
as the unique positive constant for which the pressure of some
function, now defined on some shift space, is zero, an analogous result to the tree case. 

Let $\Delta$ be a hyperbolic  or Euclidean regular building of type $(W,S)$ of dimension $n$, and let $P$ be a Coxeter polyhedron associated to $(W,S)$. 
We suppose that there exists a discrete group $\G$ acting totally discontinuously, isometrically, and cocompactly on $\Delta$.
Let $(X,g)$ be the compact quotient $\G \bs \Delta$ of $\Delta$, as in Section~\ref{sec:topentropy}.

Let $\cG(X) = \Gamma \bs \cG(\Delta)  $ be the space of geodesics on $X$ and $\vp _t $ the geodesic flow. By Theorems \ref{thm:Enrico} and \ref{thm:variational}, $h_{\Vol}$ is given by $\underset{ m \in \mathcal{M}(\vp_{t}) }{\sup} \{ h_m  \} ,$
where $\mathcal{M}(\vp_t)$ is the set of all $\vp$-invariant probability measures on $\cG(X)$. Consider the polyhedron $P$ as $W \bs \X$. The group $W$ acts  properly on $T^1 (\X)$ and, since $W$ acts by isometries on $\X$, the action on $T^1 (\X)$ commutes with the geodesic flow. Set $\cZ := W \bs T^1 (\X)$ and $\phi _t $ for the quotient action of the geodesic flow. We define the natural mapping $\pi $ from $\cG(X)$ to $\cZ$: for $v = \gamma (0) \in \cG(X)$, take an apartment $\A$ containing  a lift $\tilde v$ of $v $  to $\cG(\Delta) $  and take for $\pi (v) $ the class of $\tilde v$ in $W \bs T^1 (\X) = \cZ $. The element $\pi (v) $ does not depend of the choices of the lift of $v$ and of the apartment, and we have $ \pi (\vp _t  v) = \phi _t (\pi v).$ By Theorem \ref{thm:RVP}, we obtain:
\[ h_{\Vol }(X) \; = \;  \underset { \mu \in \mathcal{M}(\phi_{t}) } \sup \left\{h_{\mu} + \int _{\cZ} h_{\mathrm{top}} (\pi ^{-1} (z), \vp _t) d\mu (z) \right\}.\]

We have reduced the computation of $h_{\Vol} (X) $ to that of the pressure of the function $h_{\mathrm{top}} (\pi ^{-1} (z), \vp _t) $ on $\cZ.$

 In the next subsection, we will see that the entropy $h_{\mathrm{top}} (\pi ^{-1} (z), \vp _t) $ is roughly the exponential growth rate of the number of longer and longer geodesic segments (which are separated enough) in the building with initial vectors projecting to $z$.


\subsection{ Pressure of the fiber entropy}\label{sec:pressure}

In order to count geodesics segments in the building with the same projection, we define a bounded function $f$ on $T^1(\X)$ as follows. 

Consider the set $T^1(\mathcal{H})$ where $\mathcal{H}$ is the union of panels of the tessellation of $\X$ by images of $P$ under $W$. 
 
We first define $f$ on $T^1(\X)^o:=T^1(\X) - T^1(\mathcal{H})$. If $v \in T^1(\X)^o$, for $H \in \mathcal{H}$, define $t_H(v)$ to be the unique $t$ such that $\gamma _v (t) $ belongs to $H$, if it exists.
\begin{itemize}
\item Suppose $\gamma_v$ intersects only one $H$ at each time $t_H$. 
Define $$f(v)= \underset { H \in \mathcal{H}}{\sum} \ln q(H) \max \{ 0, 1- |t_H(v)| \},$$
where $q(H)+1$ is the thickness of $H$.
It is a finite sum as $\gamma_v$ intersects only a finite number of panels between time $-1$ and 1. 
\item Suppose $\gamma_v$ intersects several $H$s at the same time. By definition of buildings, the number $q(v, \varepsilon)$ of preimages of $\gamma_v([t_H(v)-\varepsilon, t_H(v)+\varepsilon])$ equals the number of $n$-dimensional $(2\varepsilon)$-discs in the building containg $\gamma_v([t_H(v)-\varepsilon, t_H(v)+\varepsilon])$. Thus the number $q(v, \varepsilon)$ is a continuous function of $v$. (Note that in the case of right-angled Fuchsian buildings, this number is simply $q(H_i)q(H_j)$ if $\gamma_v$ intersects $H_i$ and $H_j$.) Therefore, the function $f$ extends continuously to $T^1(\X)^o$.
\end{itemize}

Define $f$ analogously on $T^1(\mathcal{H})$, except that in the sum in the definition of $f$, we exclude all $H$'s such that $v$ belongs to $T^1(H)$. The resulting function $f$ is not continuous, precisely on $T^1(\mathcal{H})$, but it is a bounded measurable function on $T^1(\X)$. Moreover, it is a $W$-invariant function. Let us denote the function on $\mathcal{Z}$ again by $f$. The function $f$ approximates the ``weighted characteristic function'' of panels $\sum_{F} \ln q(F) \chi_F$ on the space of geodesics $\cZ$. 


\begin{prop}\label{prop:pressure} Let $X$ be a compact quotient of a regular Euclidean or hyperbolic building.
The volume entropy $h_{\Vol}(X)$ is equal to the pressure
$$ h_{\Vol}(X) = \mathcal{P}_{\cZ}(f, \phi_t),$$ of the geodesic flow $\phi_t$ on $\cZ$ with respect to the function $f$ defined above.
\end{prop}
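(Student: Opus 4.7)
The plan is to combine the Relativized Variational Principle (Theorem~\ref{thm:RVP}) applied to the factor map $\pi : \cG(X) \to \cZ$ with a direct identification of the fiber topological entropy $z \mapsto h_{\mathrm{top}}(\pi^{-1}(z), \vp_t)$ with the function $f$. Concretely, I aim to show that for every $\phi_t$-invariant probability measure $\mu$ on $\cZ$,
\begin{equation*}
\int_\cZ h_{\mathrm{top}}(\pi^{-1}(z), \vp_t) \, d\mu(z) \; = \; \int_\cZ f \, d\mu,
\end{equation*}
i.e., the two integrands are essentially cohomologous in the sense of Definition~\ref{def:newpressure}. Once this is done, the Relativized Variational Principle together with the definition of pressure (Definition~\ref{def:pressure}) gives
\begin{align*}
h_{\Vol}(X) &= \sup_{\mu \in \mathcal{M}(\phi_t)} \left\{ h_\mu + \int_\cZ h_{\mathrm{top}}(\pi^{-1}(z), \vp_t) \, d\mu \right\} \\
&= \sup_{\mu \in \mathcal{M}(\phi_t)} \left\{ h_\mu + \int_\cZ f \, d\mu \right\} \; = \; \cP_\cZ(f, \phi_t).
\end{align*}

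To compute the fiber entropy, I would fix $z \in \cZ$ with a lift $\tilde v \in T^1(\X)$ and unfold what a preimage in $\pi^{-1}(z)$ means. By the building axioms and the retraction property, any $\gamma \in \pi^{-1}(z)$ arises by choosing, at each panel $H$ crossed by the apartment geodesic $\gamma_{\tilde v}$ at time $t_H$, one of the $q(H)+1$ chambers meeting $H$ into which the geodesic continues; this chamber chain then determines a geodesic in $\Delta$, hence a class in $\cG(X) = \Gamma \bs \cG(\Delta)$. Two such geodesics whose chamber chains first disagree at a panel crossing inside $[0,T]$ diverge by at least a constant depending only on the geometry of $P$, so they are $(T,\epsilon)$-separated in $(\cG(X), d_\cG)$ for $\epsilon$ smaller than that constant; conversely, agreeing chamber chains force coincidence on $[0,T]$. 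Thus on the generic set $\cZ^o = W \bs T^1(\X)^o$,
\begin{equation*}
h_{\mathrm{top}}(\pi^{-1}(z), \vp_t) \; = \; \limsup_{T \to \infty} \frac{1}{T} \sum_{H \,:\, t_H(v) \in [0,T]} \ln q(H),
\end{equation*}
with the obvious modification via the continuous count $q(v,\varepsilon)$ at directions simultaneously meeting several panels. The tent-function construction of $f$ was arranged precisely so that
\begin{equation*}
\int_0^T f(\phi_s z) \, ds \; = \; \sum_{H \,:\, t_H(v) \in [0,T]} \ln q(H) \; + \; O(1),
\end{equation*}
and the analogous definition of $f$ on $T^1(\mathcal{H})$ yields the corresponding identity for $z$ in the singular locus.

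To pass from these pointwise identities to the integrated one, I would invoke ergodic decomposition. For $\mu$ ergodic, Birkhoff's theorem gives $\lim_T \frac{1}{T} \int_0^T f(\phi_s z) \, ds = \int_\cZ f \, d\mu$ for $\mu$-a.e. $z$, and the fiber-entropy identity identifies this limit with $h_{\mathrm{top}}(\pi^{-1}(z), \vp_t)$ $\mu$-a.e.; integrating and then averaging over ergodic components handles general $\mu$. The main obstacle is the uniform combinatorial counting in the fiber: one must establish that the log-count of $(T,\epsilon)$-distinguishable geodesic extensions is $\sum_H \ln q(H) + O(1)$ with an $O(1)$ uniform in $z$ and in the complexity of the apartment geodesic. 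The delicate case is that of directions meeting the codimension-$\geq 2$ skeleton, where several panels are crossed at once and the number of continuations is the chamber count around that cell rather than a naive product of panel thicknesses. The continuous extension of the counting function built into the definition of $f$ via $q(v,\varepsilon)$ is exactly what should make the identification survive in this case, but verifying it requires a careful local analysis at each stratum of the singular set.
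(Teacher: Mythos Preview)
Your proposal is correct and follows essentially the same route as the paper: apply the Relativized Variational Principle to the factor $\pi : \cG(X) \to \cZ$, identify the fiber entropy $h_{\mathrm{top}}(\pi^{-1}(z),\vp_t)$ with $\limsup_T \frac{1}{T}\sum_{H:\,t_H\in[0,T]}\ln q(H)$ by counting branchings of $\rho^{-1}(\widetilde\gamma_v)$, and match this to $\frac{1}{T}\int_0^T f(\phi_s z)\,ds$ via the tent-function identity and the ergodic theorem. One small slip: at each panel crossing a geodesic has $q(H)$ (not $q(H)+1$) forward continuations in $\Delta$, since one adjacent chamber is the one just exited---this is why $\ln q$ rather than $\ln(q+1)$ appears in $f$ and in your own displayed formulas.
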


Let us denote $t^+_v$ ($t^-_v$) the smallest $t \geq 0$ (largest $t < 0$, respectively) such that $\gamma_v(t)$ intersects the panels of the tessellation transversally.
\begin{defi}{\rm[definition of $l$ and $q$]}\label{def:l} 
For $v \in \cZ,$ 
\[ l(v) := d(\gamma_v(t^+_v), \gamma_v(t^-_v) ) = t^+_v - t^{-}_v. \]

If $v$ intersects only one panel transversally, then we define $q(v)+1$ to be the thickness of that face. If $v$ intersects several panels, $q(v)$ is $q(v, \varepsilon)$ for $t^{+}_v$ (defined in the beginning of Section~\ref{sec:pressure}). 
\end{defi}
We define $q(v)$ similarly when $v$ is contained in $T^1\mathcal{H}$:  again denote by $t^{+}_v$ the first time $t \geq 0$ such that $\gamma_v(t)$ intersects the panels of the tessellation transversally, and define $q(v)$ as above.

\begin{cor}\label{coro:pressure}
The function $\ln q/l$ is essentially cohomologous to $f$, thus
\[ h_{\Vol}(X) = \mathcal{P}_{\cZ}(\frac{\ln q }{l}, \phi_t). \]
\end{cor}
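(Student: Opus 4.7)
The plan is to verify the essentially cohomologous condition of Definition~\ref{def:newpressure}: that for every $\phi_t$-invariant probability measure $\mu$ on $\cZ$,
\[ \int_\cZ f \, d\mu \; = \; \int_\cZ \frac{\ln q}{l} \, d\mu. \]
Once this is in hand, Proposition~\ref{prop:pressure} immediately yields $\mathcal{P}_\cZ(\ln q/l, \phi_t) = \mathcal{P}_\cZ(f, \phi_t) = h_{\Vol}(X)$.

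The key observation is that along any geodesic, both $f$ and $\ln q/l$ are ``weighted counters'' of panel crossings, so their Birkhoff integrals differ by a bounded amount. Fix $v \in \cZ$ and denote by $\ldots < s_{-1} < s_0 \leq 0 < s_1 < s_2 < \ldots$ the transverse panel-crossing times of $\gamma_v$; at $s_k$ write $H_k$ for the panel crossed (or take $\ln q(H_k) := \ln q(\phi_{s_k}v, \varepsilon)$ in the multi-panel case of Section~\ref{sec:pressure}). Set $\Sigma_T(v) := \sum_{k:\, s_k \in (0, T]} \ln q(H_k)$. I claim the uniform estimates
\[ \left| \int_0^T f(\phi_s v)\, ds \, - \, \Sigma_T(v) \right| \leq C \quad \textrm{and} \quad \left| \int_0^T \frac{\ln q}{l}(\phi_s v)\, ds \, - \, \Sigma_T(v) \right| \leq C, \]
valid for all $v \in \cZ$ and all $T \geq 1$, with $C$ depending only on $P$ and on $q_{\max} := \max_F q(F)$.

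For the first inequality, each crossing at time $s_k$ contributes to $f$ a hat function $\ln q(H_k) \cdot \max\{0, 1 - |s - s_k|\}$ of total mass $\ln q(H_k)$, which is fully counted in the integral over $[0, T]$ whenever $s_k \in [1, T-1]$. The boundary corrections (crossings centered in $[-1, 1] \cup [T-1, T+1]$) are bounded by $2N \ln q_{\max}$, where $N$ is a uniform upper bound on the number of panels crossed by any geodesic segment of length $2$ in $\X$; $N$ is finite because $P$ has only finitely many faces and is compact. For the second inequality, $q \circ \phi$ and $l \circ \phi$ are constant on each open interval $(s_k, s_{k+1})$ with values $q(H_{k+1})$ and $s_{k+1}-s_k$, so that $\int_{s_k}^{s_{k+1}} (\ln q/l)(\phi_s v)\, ds = \ln q(H_{k+1})$; the only errors come from the two truncated intervals containing $0$ and $T$, each contributing at most $\ln q_{\max}$.

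Subtracting the two estimates gives $\bigl| \int_0^T (f - \ln q/l)(\phi_s v)\, ds \bigr| \leq 2C$ uniformly in $v$ and $T$. For a $\phi$-invariant ergodic measure $\mu$, Birkhoff's theorem applied to the bounded nonnegative function $f$ yields $\frac{1}{T}\int_0^T f(\phi_s v)\, ds \to \int f\, d\mu$ for $\mu$-a.e.\ $v$, and the uniform bound propagates this convergence to $\frac{1}{T}\int_0^T (\ln q/l)(\phi_s v)\, ds$ with the same limit. Since $\ln q/l \geq 0$, the monotone form of the ergodic theorem then identifies the limit with $\int (\ln q/l)\, d\mu$, giving both the integrability of $\ln q/l$ and the desired equality. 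The non-ergodic case follows by ergodic decomposition. The main obstacle is producing the uniform constant $N$ that controls the boundary corrections; once that geometric fact about the tessellation of $\X$ by $P$ is in place, the rest is a routine application of the ergodic theorem.
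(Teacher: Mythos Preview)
Your proof is correct and follows essentially the same route as the paper: both arguments establish the uniform bound $|S_T f(v) - S_T(\ln q/l)(v)| \leq C$ by comparing each Birkhoff integral to the sum $\Sigma_T(v)$ of log-thicknesses of panels crossed on $[0,T]$, and then pass to invariant measures. The only cosmetic difference is in the final step: the paper uses the identity $\int g\,d\mu = \frac{1}{T}\int S_T g\,d\mu$ (valid for nonnegative $g$ by Tonelli and $\phi$-invariance) to conclude $\bigl|\int(\ln q/l)\,d\mu - \int f\,d\mu\bigr| \leq C/T \to 0$ directly, whereas you route through Birkhoff's theorem and the nonnegative ergodic theorem; both are fine, though the paper's argument is slightly more economical and avoids the ergodic decomposition.
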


\begin{proof} \textit{(of Proposition~\ref{prop:pressure})} Let $f$ be as above. We first claim that, for all $v \in \cZ$,
$$ h_{\mathrm{top}} (\pi ^{-1} (v), \vp _t) \;= \; \underset{ T \to \infty}{\limsup} \frac{1}{T} S_Tf (v). $$

Fix $\e $ positive and small and $T$ large. Fix a retraction map $\rho : \Delta \to \A$ centered at a chamber $C$ containing a lift $\tilde v$ of $v$.  Let us denote the extension of the retraction map to the space of geodesics $\cG_C(\Delta) \to \cG(\A)$ again by $\rho$, where $\cG_C(\Delta)$ is the set of geodesics passing through the chamber $C$.
The geodesic $\widetilde{\g}_{v}$ lies in $\A$ and the set $\pi^{-1} (v)$ is the set of geodesics $\rho^{-1} (\widetilde{\g} _{v})$ in $\cG(\Delta)$. All the geodesics in $\rho ^{-1} (\widetilde{\g} _{v})$ coincide at time zero and intersect the panels of the tessellation at the same sequence of times. For any two such geodesics, there is a greatest interval of time including time zero on which they coincide. 
\begin{figure}[htb!]
\centering%
\includegraphics{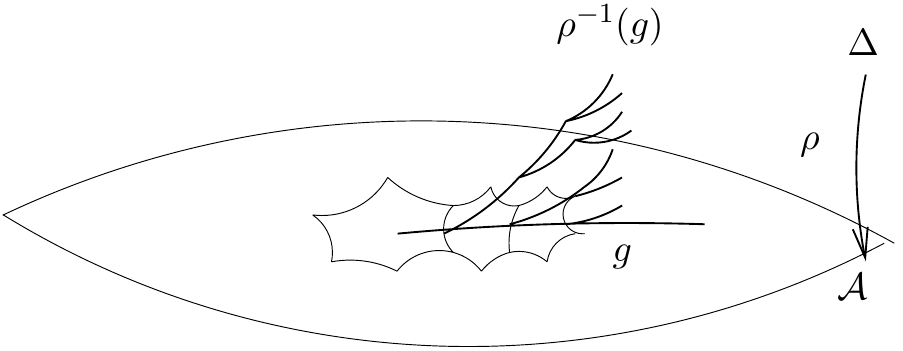}
\caption{Inverse image of a geodesic under the retraction}
\label{fig:building}
\end{figure}

Let us first assume that $\g_v$ is not contained in $T^1(\mathcal{H})$, and that it intersects only one panel at a time.
Then, two geodesics in $\rho ^{-1} (\widetilde{\g} _{v})$, at each common intersection with the panels of the tessellation, might bifurcate or not. (see Figure~\ref{fig:building}). 

 There is a function $\e \mapsto a(\e)$ such that, if $\g $ and $\g '$ coincide for $s, |s| \leq a(\e/2) $, then $d_\cG (\g, \g')< \e$. Conversely, if $d_\cG (\g, \g ' )< \e$, then $\g, \g'$ coincide for $s, |s| \leq a(\e)$. Explicitly, $a(\e) $ is such that $\int _{a(\e)}^\infty 2t e^{-t} dt = \e.$ It follows that  if a set of geodesics in $\rho ^{-1} (\widetilde{\g}_{v})$ is  $(T,\e)$-separated, then any two geodesics in that set have bifurcated at some time $s$ , $ -a(\e/2) \leq s \leq T +a(\e/2) $. In the same way, a $(T,\e)$-spanning set of geodesics in $\rho ^{-1} (\widetilde{\g}_{v})$ must take into account all bifurcations taking place at times $s$, $-a(\e) \leq s \leq T + a(\e).$ This holds also for maximal $(T,\e)$-separated sets and minimal $(T,\e)$-spanning sets, so that:
\begin{align*}
 \# [\rho ^{-1} (\widetilde{\g}_{v}([-a(\e), T+a(\e)]))] \leq \;  & S_{d_{\cG}} (\vp, \e , T, \rho ^{-1} (\widetilde{\g}_{v})) \leq \\
 \leq N_{d_{\cG}} (\vp, \e , T, \rho ^{-1} (\widetilde{\g}_{v}))& \leq \# [\rho ^{-1} (\widetilde{\g}_{v} ([-a(\e/2), T+a(\e/2)]))] . 
 \end{align*}


 By definition of a building, the cardinality of $\rho^{-1} (\g ((s,s')))$ is given by the product of  the thicknesses of all the panels crossed  by $\g(s,s')$. 
 
 Let   $\delta $  be the greatest nonpositive $t$ such that $\g _v(t) $ belongs to only one $(n-1)$-dimensional cells. Then, 
\begin{align*} \int_{\delta} ^{1+\delta} f(\phi_t(v))dt \; &= \; \frac{1}{2} \sum_{\g _v(\dd) \in F} \ln q(F) \quad \quad \quad \quad (*) \\
\textrm {and } \; \int_{l(v) +\dd -1} ^{l(v)+\delta} f(\phi_t(v))dt\; &= \; \frac{1}{2} \sum_{\g _v(l(v) +\dd )  \in F'} \ln q(F'). 
\end{align*}
Here, $F$ and $F'$ denote the last and the first panel that $\g _v$ meets before (or at) and after time $0$, respectively.
 
 By the above properties of the function $f$, it follows that for $a >0$:
 $$\int _{-a}^{T+a} f(\phi _t v) dt \; \leq \; \ln \left( \#[\rho ^{-1} (\g_{\tilde v } ([-a, T+a]))] \right) \; \leq \;  \int _{-a-1}^{T+a +1} f(\phi _t v) dt .$$
 
 By the definition of $f$ (in the beginning of Section~\ref{sec:pressure}), the same formula holds even if $\gamma_v$ intersects several panels at the same time, or if it is contained in $T^1(\mathcal{H})$ by the definition of the function $f$.

 Let $ \ln K$ be the maximum of the function $f$. It follows that 
 \begin{align*}
 S_T f(v)  + 2 a(\e) \ln K  &\leq S_{d_{\cG}} (\vp, \e , T, \rho ^{-1} (\widetilde{\g}_{v})) \leq \\
 &\leq N_{d_{\cG}} (\vp, \e , T, \rho ^{-1} (\widetilde{\g}_{v})) \leq   S_T f(v)  + (2 a(\e/2)  +2) \ln K.
 \end{align*}

 The claim  follows by taking $\limsup \frac{1}{T}$ of this inequality. Since the function  $f$ is bounded, we can apply the ergodic theorem with   any  $\phi _t$-invariant measure. This shows that the functions $f$ and $h_{\mathrm{top}} (\pi ^{-1} (v), \vp _t) $ are essentially cohomologous. By the discussion above,  the volume entropy $h_{\mathrm{vol}}(X)$ is equal to $\mathcal{P}_\cZ(f, \phi)$ and this proves Proposition \ref{prop:pressure}.
 \end{proof}
 
\begin{proof} \textit{(of Corollary~\ref{coro:pressure})}
We show that ${\ln q}/{l}$ on the space of geodesics is essentially cohomologous to $f$ by showing that, for any $v \in \cZ$, and any $T>0$,
$$ | S_T(\ln q /l)(v) - S_T (f) (v)|  \leq C \ln K, $$ for some constant $C$. 

For  $v \in \cZ$, let $\dd$ ($(T + \dd ')$) be the greatest nonpositive (smaller than $T$, respectively) time when $\g _v $ meets a panel of the tessellation. By definition, $\int _\dd^{T+\dd '}(\ln q /l)(v) $ is the logarithm of the product of the thicknesses of the panels met by the geodesic $\widetilde{\g}_{v} ([\dd, T+\dd ')). $ The difference with $S_T(\ln q /l)(v)$ is made of $\int _{\dd}^0 \ln q /l (\phi _t (v)) dt $ and  $\int _{T+ \dd'}^T \ln q /l (\phi _t (v)) dt $. Since by definition, $|\dd | \leq l (\phi _t v)$ for $\dd  < t \leq 0$, the first integral is bounded by $\ln K$. In the same way, the second integral is also bounded by $\ln K $.
Analogously, by the property $(*)$ of $f$, $\int _{\dd}^{T+\dd '}f (v) $ is the logarithm of the product of the thicknesses of the panels met by the geodesic $\widetilde{\g}_{v} ([\dd, T+\dd ')) $, up to an error $\frac{1}{2} \ln K$ at $\dd$ and $T+ \dd'$ (we assume that $T>1$).  Since $f$ is bounded by $\ln K$ and $|\dd| $,  $|\dd'|$ by the diameter of the chamber, the difference of $S_T f $ with $\int _{\dd}^{T+\dd '}f (v) $ is bounded by $C_1 \ln K$. 
Thus, setting $C = C_1 +2$, \[ | S_T(\ln q/l)(\g) -S_T(f)(\g) | \leq C \ln K. 
\]
Now by the claim, and the fact that $\int f d\mu = \int f \circ \phi_{t} d\mu = \frac{1}{T} \int_{0} ^{T} \int f \circ \phi_{t} dt d\mu=\frac{1}{T} \int S_T(f) d\mu$ for any $\phi_t$-invariant measure $\mu$,  we conclude that for any $T>0$,
\[ \left| \int \frac{\ln q}{l} d\mu -\int f d\mu \right| < C/T.  \]
 Therefore ${\ln q}/l$ is essentially cohomologous to $f$ and their pressures are equal.
\end{proof}

Theorem \ref{th:pressure} directly  follows from Corollary \ref{coro:pressure}. Since $\cZ = W\bs T^1(\X)$, $T^1 (Y) = W' \bs T^1 (\X)$ and $W'$ is a finite index subgroup of $
W$, the space $T^1 (Y)$ is a finite extension of $\cZ$ and the geodesic flow $\vp _t$ on $T^1Y$ projects to the flow $\phi _t $ on $\cZ$.  Each invariant measure on $T^1(Y)$ projects on some invariant measure on $\cZ$. Since the extension is finite, the entropy is preserved (apply e.g. Theorem \ref{thm:RVP}). The functions $f$ and $\ln q /l$ lift to functions which we denote the same way. In the proof of Corollary \ref{coro:pressure}, we showed that both $S_T f (v) $ and $S_T (\ln q/l) (v) $ are the number of preimages (under $\rho$) of geodesic segment $\g _v ([0,T))$, up to some bounded error. Therefore the limits $ \underset{T \to \infty}{\lim }\frac{1}{T} S_T f(v)$ and $\underset {T\to \infty}{\lim } \frac{1}{T} S_T ( \ln q/l) $ are the same when computed in $T^1(Y) $ or in $\cZ$. Given that the entropies are the same and the integral against invariant measures are the same, it follows that the pressure of the functions $f$ and $\ln q /l $ is the same on $T^1(Y)$ as on $\cZ$. By Proposition \ref{prop:pressure} and Corollary \ref{coro:pressure}, these pressures  coincide with the volume entropy of $X$.

\subsection{Coding of geodesics and the pressure of a subshift}\label{sub:coding}
Now let us give another characterization of the volume entropy of regular hyperbolic buildings in
terms of the pressure of a function on a subshift. The shift space $\Sigma$ we consider here is the set of geodesic cutting sequences on $\A$, which is not necessarily a subshift of finite type. (All we need here is a section of the geodesic flow.) Let us first describe it for regular hyperbolic buildings. 

Take as alphabet the set  of panels of $P$, and for all integer $k$, let $\Sigma _k$ be the set of cylinders based on the words $x_{-k}, x_{-k+1}, \cdots, x_0, \cdots, x_k$, of length $2k +1$, such that there exists at least  one geodesic $\g$ which intersects transversally the faces  $x_{-k}, x_{-k+1}, \cdots, x_0, \cdots, x_k$, in that order, and of which intersection with $x_0$ occurs at time 0. The intersection $\Sigma = \cap_k \Sigma _k$ is a closed subset of the set of biinfinite sequences of symbols. Let $\sigma $ be the shift transformation on the space of sequences: $\sigma : \ux \mapsto \sigma (\ux) =  \underline y, $ where $y_n = x _{n+1}$. The space $\Sigma $ is shift invariant. For each $\ux \in \Sigma $, there exists  a unique geodesic $p(\ux)$ which intersects the faces $x_{-k}, \cdots, x_k$ corresponding to any cylinder in $\Sigma _k $ containing $\ux$. It exists because a decreasing intersection of nonempty compact sets is not empty. It is unique because two distinct geodesics in $\Hyp$ cannot remain at a bounded distance from one another. Set $p: \Sigma \to \cZ$ for the mapping just defined. Endow the set of bi-infinite sequences of indices of faces with the product topology. The set $\Sigma $ is the intersection of the cylinders which contain it, therefore $\Sigma $ is a closed invariant subset. 

Now to have the geodesic flow as the suspension flow of $(\Sigma, \sigma)$, let us define the ceiling function of a bi-infinite sequence corresponding to a geodesic $\gamma$.
\begin{defi}{\rm [definition of ceiling function $L$ and the function $Q$]}\label{def:L} We define $L(\ux)$ to be the length of geodesic segment between the panels $x_0$ and $x_1$. Define $Q(\ux)$ so that $Q(\ux)+1$ is the thickness at $x_0$.
\end{defi}
 The functions $L$ and $Q$ are similar to the functions $l$ and $q$ defined in Definition~\ref{def:l}, except 
at intersection of several panels:
 if the base point of $p(\ux)$ belongs to two faces $x_0$, $x_1$, then 
 \[ L(\ux)=0, \quad Q(\ux) + Q(\sigma \ux) = q(x_0)+q(x_1) = q(p(\ux)),
 \]
 with  the definition of $q$ (in Definition~\ref{def:l}). Similar relation holds when the base point of $p(\ux)$ belongs to more than two panels.
  Let us denote the suspension space by $\Sigma_L$ and the suspension flow by $\psi$. 
 
 Although $L$ is not bounded from below by a positive number, there is a finite $N$ such that $\sum _0^N L (T^k x) $ is bounded from below by a positive number (on $Y$, there is a $N$ such that a geodesic intersecting $N$ panels is at least as long as the injectivity radius of $Y$) and Proposition \ref{prop:suspension} remains valid. 

For Euclidean buildings, there might be several geodesics corresponding to a sequence. However, given $\ux \in \Sigma$, $p(\ux)$ is a "tree-band", i.e. a compact convex set in $\R^n$ times the inverse image of one geodesic under the retraction map. Thus the entropy contributed by $p(\ux)$ is same as the entropy contributed by a geodesic.
 
 By Proposition \ref{prop:suspension}, we have:
\begin{cor}\label{cor:subshift} 
Let $X$ a compact quotient of a regular building.
the volume entropy $h_{\Vol}(X)$ is the unique positive constant $h$ such that
$$ \mathcal{P}_{\Sigma}(\ln Q-hL)=0,$$
where $\mathcal{P}_{\Sigma}(\ln Q-hL)$ is the pressure of the function $\ln Q -hL$, now on the space $(\Sigma, \sigma)$ of the
subshift.
\end{cor}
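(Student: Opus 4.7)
The plan is to combine Corollary~\ref{coro:pressure}, which gives $h_{\Vol}(X) = \cP_\cZ(\ln q/l, \phi_t)$, with the suspension formula of Proposition~\ref{prop:suspension} applied to the symbolic system $(\Sigma, \sigma)$ with ceiling function $L$. The coding map $p : \Sigma \to \cZ$ extends to a flow-equivariant map $\widehat p : \Sigma_L \to \cZ$ by $\widehat p(\ux, s) = \phi_s(p(\ux))$. In the hyperbolic case, two distinct geodesics in $\Hyp$ cannot stay bounded, so $p$ is injective; in the Euclidean case each fibre $p^{-1}(\gamma)$ is a tree-band, but the extra directions are invariant under the geodesic flow and, as noted just before the statement, contribute no additional entropy. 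In either case $\widehat p$ induces an entropy-preserving bijection between $\psi$-invariant probability measures on $\Sigma_L$ and $\phi_t$-invariant probability measures on $\cZ$ (up to null sets), hence $\cP_{\Sigma_L}(\widetilde f, \psi) = \cP_\cZ(\ln q/l, \phi_t) = h_{\Vol}(X)$, where $\widetilde f$ is the pullback of $\ln q/l$ along $\widehat p$.

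Next I would compute the function $\widetilde f_\Sigma$ appearing in Proposition~\ref{prop:suspension}. For $\ux \in \Sigma$ with $L(\ux) > 0$ and any $s \in (0, L(\ux))$, the base point of $\phi_s(p(\ux))$ lies strictly between the consecutive transversal crossings with the panels $x_0$ and $x_1$, so by Definition~\ref{def:l} we have $q(\phi_s(p(\ux))) = q(x_0) = Q(\ux)$ and $l(\phi_s(p(\ux))) = L(\ux)$. Consequently
\[
\widetilde f_\Sigma(\ux) \; = \; \int_0^{L(\ux)} \frac{\ln Q(\ux)}{L(\ux)} \, ds \; = \; \ln Q(\ux).
\]
For the singular sequences where $L(\ux) = 0$ (i.e. $p(\ux)$ meets several panels at the same time), the identity $Q(\ux) + Q(\sigma\ux) + \cdots = q(p(\ux))$ for consecutive symbols sharing a crossing time ensures that the Birkhoff sums $S_T \ln Q$ still encode $S_T(\ln q/l)$ correctly, so the equality $\widetilde f_\Sigma = \ln Q$ holds almost everywhere for every $\sigma$-invariant measure.

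Finally I would apply the suspension formula. Although $L$ is not bounded away from zero, it is bounded above by the diameter of $P$, and there exists $N$ with $\sum_{k=0}^{N-1} L\circ\sigma^k$ bounded below by a positive constant, as noted before the statement; this is enough for Proposition~\ref{prop:suspension}. It then produces a \emph{unique} number $h$ such that $\cP_\Sigma(\widetilde f_\Sigma - h L) = 0$, and that unique $h$ equals $\cP_{\Sigma_L}(\widetilde f, \psi) = h_{\Vol}(X)$. Substituting $\widetilde f_\Sigma = \ln Q$ yields $\cP_\Sigma(\ln Q - h_{\Vol}(X) L) = 0$, which is the claim; uniqueness is reinforced by the strict monotonicity of $h \mapsto \cP_\Sigma(\ln Q - h L)$, since $\int L\, d\mu > 0$ for every $\sigma$-invariant $\mu$ not concentrated on the negligible set $\{L = 0\}$.

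The main obstacle will be the measure-theoretic bookkeeping for the semi-conjugacy $\widehat p$: verifying that the degenerate sequences with $L(\ux) = 0$ form a zero set for every $\psi$-invariant probability measure on $\Sigma_L$, and dealing with the non-injectivity of $p$ over tree-bands in the Euclidean case without losing entropy or distorting the pressure — everything else is bookkeeping around Proposition~\ref{prop:suspension} and Corollary~\ref{coro:pressure}.
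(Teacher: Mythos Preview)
Your proposal is correct and follows essentially the same route as the paper: the corollary is deduced directly from Proposition~\ref{prop:suspension} applied to the suspension $(\Sigma_L,\psi)$ coding the geodesic flow on $\cZ$, combined with Corollary~\ref{coro:pressure}. You have simply made explicit the two steps the paper leaves implicit in its one-line ``By Proposition~\ref{prop:suspension}'': the identification of $\cP_{\Sigma_L}(\widetilde f,\psi)$ with $\cP_\cZ(\ln q/l,\phi_t)$ via the coding map, and the computation $\widetilde f_\Sigma=\ln Q$ from the definitions of $q,l,Q,L$.
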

\noindent{\it Remark.} The characterization above is analogous to the characterization of the volume entropy on a finite
graph (\cite{Lim}), as $h_{\Vol}$ is the unique positive constant such that the system of equations $x=Ax$ (here $A$ is the edge adjacency matrix
multiplied by $e^{-hL(f)}$ to each $(e,f)$-term) has a positive solution. It is equivalent to saying that $h$ is the unique
positive constant such that the pressure $\mathcal{P}(-hL)$ of the function $-hL$ (on the space of subshifts of $A$) is equal to zero.




\section{Liouville measure and a lower bound}\label{s:lowerbound}


In this section we show Proposition \ref{prop:santalo} and Theorem~\ref{th:Liouville}. As before,  let $\Delta$ be a regular building defined in Section~\ref{ss:building}, and let $X$ be a compact quotient of $\Delta$. We continue denoting $\cZ$ for the quotient $W \bs T^1{\X}$ and $Y$ for $W' \bs  \X$, so that $T^1 (Y) $ is a finite cover of $\cZ .$

\subsection{Lower bound of volume entropy}\label{sec:lowerbound}
As recalled in the introduction, the lower bound of the volume entropy (Corollary \ref{cor:lowerbound}) follows from an integral computation (Proposition \ref{prop:santalo}).
This computation uses results  known as Santal\'o's formulas in integral geometry.


Let $P$ be a convex polyhedron in $\X$, either hyperbolic or Euclidean. Let us consider $P$ as a manifold with boundary. Every element $\g$ of the unit tangent bundle $T^1P$  can be specified by $\tilde{x} =(q,v,t)$, where $q \in \partial P$, $v \in S^{(n-1,+)}$ is a unit vector of direction (here, $S^{(n-1, +)}$ is the set of inward directions, which can be thought as the northern half of the unit $(n-1)$-- sphere), and $t$ is a real number between $0$ and $l(\g)$. In this way $(q, v,t)$ form a coordinate system of $T^{1}P$.
 Denote the Liouville measure  on $T^1P$ by $m_L$. 
 
\begin{pro} [Proposition \ref{prop:santalo}] 
$$\int_{T^{1}(P)} \frac{\ln q}{l} dm_L = c_{n} \underset{F}{\sum} \ln q(F) \Vol(F),$$
where $c_n$ is the volume of the unit ball in $\mathbb{R}^n$, and
where the sum is over the set of panels of $P$.
\end{pro}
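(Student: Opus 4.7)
I will prove Proposition~\ref{prop:santalo} by a standard Santal\'o-type change of variables on the unit tangent bundle of $P$, viewed as a manifold with (piecewise smooth) boundary. Up to a set of $m_{L}$-measure zero (vectors whose backward geodesic hits a lower-dimensional stratum of $\partial P$, or vectors tangent to $\partial P$), every $\tilde{x} \in T^{1}(P)$ is determined uniquely by a triple $(q,v,t)$, where $q \in \partial P$ is the entry point of the backward geodesic through $\tilde{x}$, $v \in S^{n-1,+}_{q}$ is the corresponding inward unit tangent vector at $q$, and $t \in [0, l(q,v)]$ is the elapsed time from $q$ to $\tilde{x}$; here $l(q,v)$ denotes the length of the chord of $P$ starting at $q$ in direction $v$. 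In these coordinates the Liouville measure factorizes as
\[ dm_{L} \; = \; \cos\theta(q,v)\, d\sigma(q)\, dv\, dt, \]
where $\theta(q,v)$ is the angle between $v$ and the inward unit normal to $\partial P$ at $q$, $d\sigma$ is the Riemannian $(n-1)$-volume on $\partial P$, and $dv$ is the spherical measure on the hemisphere $S^{n-1,+}_{q}$. The factor $\cos\theta$ is the flux of the unit-speed geodesic flow through $\partial P$; its derivation is purely local at $q$ and works identically in the hyperbolic and Euclidean cases.

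The point of this parametrization is that the integrand $\ln q / l$ is constant along each chord of $P$. Indeed, if $q$ lies in the relative interior of a panel $F$, then for every $t \in (0, l(q,v))$ Definition~\ref{def:l} gives $l(\tilde{x}) = l(q,v)$ and $q(\tilde{x}) = q(F)$; the set of entry points lying on a face of codimension $\geq 2$ has $d\sigma$-measure zero and may be ignored. Hence the inner $t$-integral is
\[ \int_{0}^{l(q,v)} \frac{\ln q(\tilde{x})}{l(\tilde{x})}\, dt \; = \; \ln q(F), \]
and Fubini yields
\[ \int_{T^{1}(P)} \frac{\ln q}{l}\, dm_{L} \; = \; \sum_{F} \ln q(F) \int_{F} \left( \int_{S^{n-1,+}_{q}} \cos\theta\, dv \right) d\sigma(q). \]
The innermost integral over the hemisphere is a rotationally invariant constant independent of $q$, equal to $c_{n}$ in the paper's convention; thus the right-hand side collapses to $c_{n}\sum_{F} \ln q(F)\,\Vol(F)$. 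When $q(F)$ is constant this specializes to $c_{n} \ln q \cdot \Vol(\partial P)$, giving the second assertion.

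The main (and essentially only) technical point is the verification of the $\cos\theta$ Jacobian when $P \subset \mathbb{H}^{n}$ rather than $\mathbb{E}^{n}$. This reduces to the standard flux formula for the geodesic flow across a smooth hypersurface, which depends only on the first-order behavior at the entry point $q$ and is insensitive to the ambient curvature. The remaining ingredients---checking that $l$ and $q$ depend only on the chord $(q,v)$, exchanging the order of integration, and evaluating the hemisphere integral---are routine.
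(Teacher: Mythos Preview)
Your proof is correct and follows essentially the same route as the paper's: both parametrize $T^{1}(P)$ by $(q,v,t)$ with $q\in\partial P$, invoke Santal\'o's formula $dm_L=(v\cdot\mathbf{n}(q))\,dq\,dv\,dt$ (your $\cos\theta$), observe that $\ln q/l$ is constant along each chord so that the $t$-integral cancels the $1/l$, and then identify the remaining hemisphere integral with $c_n$. Your write-up is, if anything, a bit more explicit about the measure-zero exceptional set and the curvature-independence of the Jacobian, but there is no substantive difference in approach.
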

\begin{proof} 
For two unit vectors $v,w$, let us denote by $v \cdot w$ the inner product (in $\R^n$).
Santal\'o's formula says that
the Liouville measure is
\begin{equation}\label{eqn:santalo} dm_L = (v \cdot \textbf{n}(q)) dq dv dt,
 \end{equation}
where $\textbf{n}(q)$ is a unit vector normal to the face of $P$ which contains $q$,  and $dq$ is the Lebesgue measure on $\partial P$ (see \cite{San1}, or \cite{San2} Section 19.1). 

By definition, the value of $\ln q(\g) $ for a given geodesic $\g =(q, v,t)$ depends only on the face $F$ which $q$ belongs to. Thus
\begin{align*}
\int_{T^{1}(P)} \frac{\ln q}{l} dm_L 
&= \int_{T^{1}(P)} \frac{\ln q}{l} (v\cdot \textbf{n}(q)) dt dv dq = \int (\ln q)  (v\cdot \textbf{n}(q)) dv dq \\
&= \sum_{F \; \mathrm{face \; of\;} P} \ln q(F) \Vol(F) \int _{S^{(n-1,+)}} (v\cdot \textbf{n}(q)) dv\\
&= c_n \sum_{F} \ln q(F) \Vol (F),\\
\end{align*}
where $c_n =  \int _{S^{(n-1,+)}}(v\cdot \textbf{n}) dv$ (where $\mathbf{n}$ is the unit vector directing the north pole of $S^{n-1}$).
Using standard hyperspherical coordinate, it is easy to see that $c_n = \frac{1}{2\pi} \mathrm{Vol}(S_{n+1}) = \mathrm{Vol}(B_n)$.
 \end{proof}

Combining Theorem \ref{th:pressure} and Proposition \ref{prop:santalo}, Corollary \ref{cor:lowerbound} follows.



\subsection{Liouville measure and Bowen-Margulis measure}\label{sec:baratin}
\vspace{.1 in} \noindent 
In the previous subsection, we computed the maximum of entropy of measures on $\cG (X)$ which come from the Liouville measure on apartments.
Recall that  $(\cG (X), \vp _t) $ is the geodesic flow of the CAT(-1) space  $X$. By \cite {Ro}, there is a unique measure of maximal entropy, which is ergodic and mixing.
We call it the Bowen-Margulis measure.  

In this subsection we show the first part of Theorem~\ref{th:Liouville}. From now on, let $\Delta$ be a regular hyperbolic building defined in Section~\ref{ss:building}, and let $X$ be a compact quotient of $\Delta$.

\begin{theo}\label{thm:liouville} The Bowen-Margulis  measure on any compact quotient of a regular hyperbolic building does not project to the Liouville measure.
\end{theo}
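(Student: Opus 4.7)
The plan is to argue by contradiction. Assume $\tau(m_{BM})$ is proportional to the Liouville measure $m_L$ on $T^1(Y)$. Since $m_{BM}$ realizes the maximum of topological entropy, the Relativized Variational Principle together with Theorem~\ref{th:pressure} force $m_L$ to be an equilibrium state for the potential $\ln q / l$ on $T^1(Y)$ under the geodesic flow $\vp_t$. By Corollary~\ref{coro:pressure}, $m_L$ is equally an equilibrium state for the essentially cohomologous bounded measurable function $f$ introduced in Section~\ref{sec:pressure}.

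Since $f$ is discontinuous precisely on $T^1(\LL)$, I would first exploit ergodicity of $\tau(m_{BM})$: either it is supported on $T^1(\LL)$, or on the flow-invariant open regular set $R := T^1(Y) \setminus T^1(\LL)$. Because $\LL$ is a totally geodesic subset of codimension one in $Y$, one has $m_L(T^1(\LL)) = 0$; so the first case is already incompatible with $\tau(m_{BM}) \propto m_L$, and we are reduced to the case where $\tau(m_{BM})$ is supported on $R$, on which $f$ is bounded and locally Lipschitz.

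Next I would invoke thermodynamic formalism via a Markov coding. In dimension~$2$ the cutting-sequence coding $(\Sigma,\sigma)$ of Section~\ref{sub:coding} gives an explicit symbolic model for the regular geodesics of $Y$ with Lipschitz ceiling $L$ and Lipschitz potential $\ln Q$, so standard arguments apply. In the higher-dimensional hyperbolic case, the geodesic flow on $T^1(Y)$ is Anosov (since $Y$ is a closed hyperbolic manifold), hence admits abstract Markov codings by Bowen~\cite{Bow1} and Ratner~\cite{Rat}; one restricts attention to rectangles lying entirely in $R$, on which $f$ is H\"older. In either setting, equilibrium states for H\"older potentials are unique and characterized by Livsic's theorem: two H\"older potentials share an equilibrium state if and only if they are cohomologous up to an additive constant, i.e.\ if and only if their Birkhoff averages coincide on every periodic orbit.

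The contradiction comes from the observation that on a closed hyperbolic manifold $Y$ of constant curvature, the normalized Liouville measure is itself the unique measure of maximal entropy of $\vp_t$, hence the equilibrium state for the zero potential. If $m_L$ were simultaneously the equilibrium state for $f$, then $f$ would be cohomologous to a constant on the regular subsystem, forcing the Birkhoff averages $(1/T_\gamma)\int_0^{T_\gamma} f(\vp_t v)\,dt$ to be the same for every closed orbit $\gamma\subset R$. This is defeated by exhibiting two closed geodesics in $R$ with different panel-crossing statistics---for instance, a short one staying near the boundary of a single chamber versus one that threads many chambers---yielding different weighted crossing frequencies and hence different $f$-averages. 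The main obstacle lies in the higher-dimensional case: because the abstract Bowen--Ratner Markov partition is not adapted to the singular set $T^1(\LL)$, both the production of such distinguishing periodic orbits entirely inside $R$ and the transfer of the Livsic/uniqueness machinery to the globally non-H\"older potential $f$ require care, and constitute the main technical work behind the theorem.
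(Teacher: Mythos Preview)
Your overall strategy---reduce to showing that Liouville cannot be an equilibrium state for $f$ (equivalently $\ln q/l$), use that on a closed hyperbolic manifold Liouville is the equilibrium state for the zero potential, and derive a cohomological constraint on periodic orbits---is the same as the paper's, and your handling of the singular set $T^1(\LL)$ via ergodicity matches the paper's discussion. Two points deserve comment.

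\textbf{The periodic-orbit contradiction is not as soft as you suggest.} Saying ``a short one staying near the boundary of a single chamber versus one that threads many chambers'' does not give a contradiction: every closed geodesic in $Y$ crosses many chambers, and when $q$ is constant the claim reduces to showing that the ratio $(\text{number of panel crossings})/(\text{length})$ is not the same for all closed geodesics. The paper makes this precise in dimension~$2$ by taking $A,B\in W'$ hyperbolic and examining the family $g_k$ of closed geodesics in the classes $[A^kB]$. For large $k$, $g_k$ spirals once more around the axis of $A$ than $g_{k-1}$ does, so the panel-crossing count $\ln M(g_k)$ is eventually affine in $k$; on the other hand an explicit trace computation gives $l(g_k)=\cosh^{-1}\bigl(\tfrac{1}{2}\tr(A^kB)(A^kB)^t\bigr)$, which is not affine in $k$. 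This is what actually kills the relation $\ln M(\gamma)=(h_{\Vol}-1)\,l(\gamma)$.

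\textbf{In higher dimensions the paper does not use Markov partitions at all.} You correctly flag that the abstract Bowen--Ratner coding is not adapted to $T^1(\LL)$ and that Livsic/uniqueness for the non-H\"older $f$ is delicate; the paper sidesteps this entirely. Instead of periodic orbits, it works with the cutting-sequence subshift $(\Sigma,\sigma)$ and compares \emph{conditional measures along stable leaves}: the Patterson--Sullivan construction on $\partial\Delta$ gives a family $\{m_{\ux}\}$ on the sets $W^s_{loc}(\ux)$ with an explicit Jacobian under $\sigma$, and the hypothesis $\tau(m_{BM})=m_L$ forces $m_{\ux}$ to be absolutely continuous with respect to Lebesgue $\lambda_{\ux}$. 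Equating Jacobians yields an \emph{almost-everywhere} coboundary equation $\log Q-(h_{\Vol}-(n-1))L=u-u\circ\sigma$ with $u$ H\"older along each $W^s_{loc}$ (and symmetrically along $W^u_{loc}$). The contradiction then comes from the boundary cross-ratio: expressed via $L$ it takes a continuum of values on $\partial\A^4$, but the coboundary equation forces it to equal a finite sum of $\log Q$ terms almost everywhere, hence to take only countably many values. This measure-theoretic/cross-ratio route is what replaces the Livsic argument you propose, and it is precisely designed to avoid the obstacle you identified.
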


In projection on $\cZ$, the statement of Theorem \ref{th:Liouville} is that the projection of the Bowen-Margulis measure from $\cG (X)$ and the projection of the Liouville measure from $T^1 (Y) $ are not the same. Both are ergodic measures, and Liouville measure has a unique lift to $T^1 (Y)$. If we assume those measures are the same, then the lift of the Bowen-Margulis measure to $T^1 (Y)$ is the Liouville measure. In other words, in that case, by Theorem \ref {th:pressure}, the Liouville measure on $T^1 (Y) $ realizes the maximum of  $(h_m + \int (\ln q/l) dm)$ over all $\phi _t $-invariant measures. 

We are going to show that this leads to a contradiction through  a cohomological argument. In the previous section, we represented  the geodesic flow on $T^1 (Y) $ as the suspension flow of a subshift  $( \Sigma, \sigma )$ by using the geodesic cutting sequence.
 A consequence of Corollary \ref{cor:subshift} is that the $\sigma$-invariant measure associated to the Bowen-Margulis measure is the  equilibrium measure for the function $\ln Q -h_{\Vol }(X)L$. On the other hand, by Proposition \ref{prop:suspension}, since the Liouville measure is the measure of maximal entropy, which is $(n-1)$, the measure associated to the Liouville measure is the equilibrium measure for the function $-(n-1) L$. 
For subshifts of finite type, R. Bowen showed  that  two H\"older continuous functions $f$ and $g$ have   the same equilibrium measure (i.e. the pressure $\mathcal{P}(f)$ of $f$ is attained by the same $\sigma$-invariant measure as the pressure $\mathcal{P}(g)$) if and only if $f$ and $g$ are cohomologous up to a constant. In particular, if $f$ and $g$ have the same equilibrium measures, then for any periodic orbit  $\g$,
\[ \int_\g f dm = \int_\g g dm + c, \]
where $dm$ is the normalized counting measure on $\g$ and $c$ is a constant independent on $\g$, namely $c = P_{\Sigma }(f, \sigma) - P_{\Sigma }(g, \sigma)$ 
 (see, for instance, \cite{PP} Proposition 3.6). 
 
\vspace{.1 in}
\noindent \textit{Remark.} It was pointed out by the referee that the function $f$ we defined in Section \ref{sec:pressure} cannot be a H\"older-continuous function on $\cG(X)$. Here is the argument of the referee.
Indeed if $f$ is continuous, then
the cross-ratio associated to log q / l is continuous on the boundary of
the hyperbolic space, but using the Coxeter walls structure one may  prove
that the cross-ratio values belong to a countable set. We will show that $\ln Q$ and $L$ are H\"older continuous on $\Sigma$.

 \vspace{0.1 in}
 
 Here, the subshift $\Sigma $ is not of finite type, but for hyperbolic surfaces, there is a subshift of finite type \`a la Bowen-Series \cite{BS} such that the subshift $\Sigma$ is a finite-to-one factor of  it. This case of surfaces will be treated in Section \ref{sec:surface}. 
By Bowen's argument, if the Bowen-Margulis measure projects to a Liouville measure, then for any periodic orbit $\g= (x_1, x_2, \cdots, x_k) ^\infty $ in $\Sigma $,
\[ \sum _{i=1 }^k \ln q_{x_i} =  ( h_{\Vol} - n +1 ) \sum _{i=1}^k l(\sigma ^i \g). 
\] 

We arrive at a contradiction by examining this relation for periodic orbits of $\vp_t$ corresponding to special closed geodesics.
In the general case, we do not have a subshift of finite type, but comparing directly the conditional measures of Bowen-Margulis measure and those of Liouville measure, we arrive at a cohomological equation for $\log Q - (h_{\Vol}-n+1)L$, true only almost everywhere. Using the cross ratio on the boundary, this suffices to arrive at a contradiction. 



\subsection{Proof of Theorem~\ref{thm:liouville} for surfaces}\label{sec:surface}

Let $(\Sigma, \sigma)$ the subshift we defined in Section~\ref{sub:coding}. For surfaces, by Series \cite{S1}, it is conjugate to a sofic system, i.e. a finite-to-one factor of a subshift $(\Sigma', \sigma')$ of finite type. The functions $Q$ and $L$ defined in Definition~\ref{def:L} are H\"older-continuous on $\Sigma$. Indeed,  $Q$ depends only on the zero-th coordinate. Also, if two geodesics $\gamma_1, \gamma_2$ are coded by sequences $\ux, \uy$ such that $\ux_i = \uy_i$ for $|i| < k$, then there are some constants $c_1, c_2$ such that $d(\gamma_1(t), \gamma_2(t))$ is smaller than the diameter of $P$ for $|t| < c_1k-c_2$. By hyperbolicity, It follows that
$d(\gamma_1(0), \gamma_2(0) < C e^{-c_1k}$. Therefore $$|L(\gamma_1)-L(\gamma_2)| < C'e^{-c_1k}.$$

We denote the functions on $\Sigma'$ associated to $Q$ and $L$ by $Q'$ and $L'$. Since the factor map and the conjugacy preserve H\"older-continuous maps, these functions $L', Q'$ are H\"older-continuous on $\Sigma'$ as well.

On $\Sigma_L$, the Liouville measure $m_L$ (Bowen-Margulis measure $m_{BM}$) is of the form $ m_L = \frac{\mu _0 \times dt } {\int l d\mu _0} $( $ m_{BM} = \frac{\mu _{BM} \times dt } {\int l d\mu _{BM}} $, respectively) for some shift-invariant probability measure $\mu _0 $ ($\mu_{BM}$, respectively)  on $\Sigma$. Let us denote the measures on $\Sigma'$ corresponding to $\mu_L, \mu_{BM}$ by $\mu'_L, \mu'_{BM}$, respectively.
 
As explained in the beginning of Section~\ref{sec:baratin}, the measure $\mu'_{BM}$ (the measure $\mu'_0$) is the equilibrium measure for the function $\ln Q' - h_{\textrm{vol}}L'$ (the function $-L'$, respectively). If the functions $\ln Q' - h_{\textrm{vol}}L'$ and $-L'$ have the same equilibrium measures and the same pressure, then they are cohomologous. It follows that $\ln Q'$ is cohomologous to $(h_{\mathrm{vol}}-1)L'$. In particular, at a periodic orbit $\g = (x_1, x_2, \cdots, x_k )^\infty$, we get

\[ \sum _{i=1 }^k \ln q_{x_i} =  ( h_{\Vol} - 1 ) \sum _{i=1}^k l(\sigma ^i \g). \] 

Going back to closed geodesics,
\[ \ln M(\g) \; = \; ( h_{\Vol} - n +1 ) l (\g), \]

where $M$ is the multiplicity of a closed geodesic, i.e. the number of preimages under $\rho$ of a period.
We derive a contradiction by constructing a family of closed geodesics on which $\ln M$ is linear but $l$ is not.

Consider two elements $A, B$ of the fundamental group $W'$ of $W' \backslash \mathbb{H}^2$, and let $g_k$ be the unique closed geodesic in the free homotopy class of $[A^kB]$.
\begin{figure}[htb!]
\centering
\includegraphics{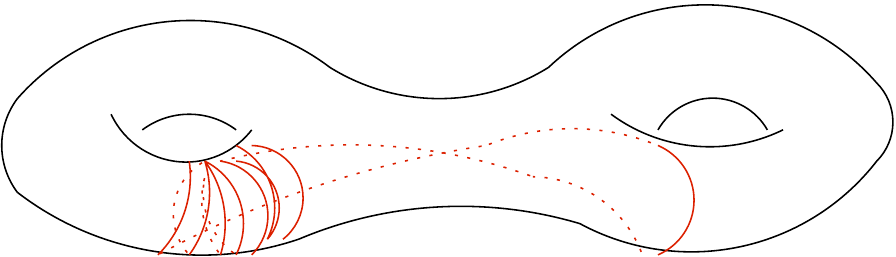}
\caption{A geodesic representing $[A^kB]$}
\label{fig:surface}
\end{figure}
After change of basis, if necessary, we may assume that 
$A= \left( \begin{array}{cc}
\lambda & 0 \\
0 & \lambda^{-1}  \end{array} \right),$
$ B=\left( \begin{array}{cc}
a & b \\
c &d  \end{array} \right),$ where $ (a^2+b^2) (c^2 +d^2) \neq 1.$ (Here $a,b,c,d, \lambda \in \mathbb{R}$ depend on both the number of faces of $P$ and the metric on $P$). 

The length of $g_k$ is given by
\begin{align*} l(g_k) &= \cosh^{-1} \left( \frac {\{\tr (A^k B)(A^kB)^t\}}{2} \right)\\
& = \cosh^{-1} \left( \frac{\lambda ^{2k} (a^2 +  c^2) +\lambda ^{-2k}( b^2 + d^2)}{2} \right),\end{align*}
which is not a linear function of $k$.

\begin{figure}[htb!]
\centering
\includegraphics{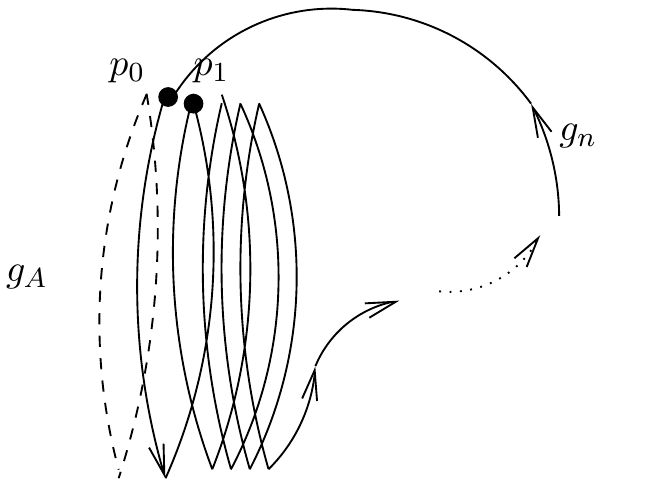}
\caption{Closing a geodesic}
\label{fig:spiral}
\end{figure}

On the other hand, as $g_k$ is a simple closed curve, for sufficiently large $k$, the closed geodesic $g_k$ has to spiral around the closed geodesic $g_A$ representing $[A]$. Let $p_0$ be the point of $g_k$ closest to $g_A$. Remove an interval of radius $l(g_A)/4$ around $p_0$ and let $p_1$ be the point closest to $p_0$.
Now let us reparametrize the geodesic $g_k$ so that $g(0)=p_1$, $g(t)=p_0$, and $g(T)=p_1$.
Remove the last part $p_0 p_1$ of the segment so that the remaining geodesic segment $g_k [0, t]$ have endpoints $p_0$ and $p_1$ which are $\e$-close.

 By Anosov closing lemma, there exists a closed geodesic $g'$ which is $\e$-close to $g_k[0,t]$ ( i.e., $|g'(s)-g_k(s)| < \e$, $\forall 0 \leq s \leq t$), and which is in the homotopy class of $A^{k-1}B$. By the uniqueness of such closed geodesic, $g'=g_{k-1}$. In other words, for sufficiently large $k$, $g_k$ follows the trajectory of $g_{k-1}$ and then spiral around the geodesic representing $[A]$ one more time. Therefore we can assume that for every $n$ sufficiently large, $\ln M({g_k} ) = C_1 k + C_2$, for some constant $C_1$ and $C_2$ ($C_1, C_2$ are functions of the thickness $q$'s and the metric polygon $P$). 
Thus $\ln M$ is a linear function of $k$, whereas the function $l$ is not.




\subsection{The proof in the general case}\label{sec:thermo}

Let $(\Sigma, \sigma)$ the subshift we defined in Section~\ref{sub:coding}. The geodesic flow on $\cZ$ is represented as a suspension flow above the section $p(\Sigma) $, with $L$ as the ceiling function. In this representation, the Liouville measure $m_L $ is of the form $ m_L = \frac{\mu _0 \times dt } {\int L d\mu _0} $ for some smooth measure $\mu _0 $ on $p(\Sigma )$.
  
\begin{lem} For $\mu _0$-almost every vector $v \in p(\Sigma )$, there is a unique $\underline {x}$ in $\Sigma$ such that $p(\ux ) = v$. For $v \in p(\Sigma ) $,   let $W^s_{loc} (v)$ be the set of vectors  $v' \in p(\Sigma ) $  such that there are  $\ux $ and $\ux '$ in $\Sigma $ with  the same nonnegative entries and  $p(\ux ) = v, p(\ux ' ) = v'.$ For Liouville-almost every sequence $v$, the set  $W^s_{loc} (v)$ is a  connected   $(n-1)$-dimensional ``suborbifold''  of $Z$, transverse to the weak unstable manifold of $v$. More precisely, on the finite cover $\overline{\pi} : T^1(Y) \to \cZ$, the inverse image $\overline{\pi}^{-1}(W^s_{loc} (v))$ is a connected $(n-1)$-dimensional submanifold.
\end{lem}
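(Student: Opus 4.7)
The plan is to first restrict to the conull set on which the coding $\ux$ of $v$ is unique, then identify $W^s_{loc}(v)$ explicitly with an open subset of the interior of the initial panel $x_0$ via the map sending a basepoint of $x_0$ to the unit vector pointing to a prescribed ideal endpoint $\xi_+ \in \partial_\infty \A$, and finally deduce transversality with the weak unstable manifold from a dimensional count. A vector $v \in p(\Sigma)$ has non-unique coding precisely when $\g_v$ passes through a face of codimension $\geq 2$ of the Coxeter tessellation of $\Hyp$, or is tangent to some panel; each such condition lies in a codimension-one smooth piece of $T^1(\Hyp)$, and the countable union of these pieces is $\mu_0$-null because $\mu_0$ is smooth on $p(\Sigma)$. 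After removing this exceptional set, $p$ becomes a Borel bijection onto the remaining conull subset and we may unambiguously speak of the coding of $v$.

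\textbf{Paragraph 2 (Identification of $W^s_{loc}(v)$).}
Fix such a $v$ with coding $\ux = (x_i)_{i \in \Z}$ and lift to $\tilde v$ in an apartment $\A \cong \Hyp$. Because $\A$ is negatively curved and the dihedral angles of $P$ are uniformly bounded below by some $\pi / m_{\max}$, the nested half-spaces cut out by the successive walls $x_0, x_1, x_2, \ldots$ shrink to a single ideal point $\xi_+ \in \partial_\infty \A$. Conversely, any unit vector based in the interior of $x_0$ and pointing to $\xi_+$ gives a geodesic realising the same forward coding. Hence $W^s_{loc}(v)$ lifts in $\A$ to the set of such vectors whose backward orbit is additionally admissible, which is a $\mu_0$-full-measure open subset of $x_0$ smoothly parametrized by the basepoint. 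Since $W'$ is torsion-free, on the finite cover $T^1(Y) = W' \bs T^1(\X)$ each panel embeds as a connected $(n-1)$-dimensional totally geodesic submanifold; therefore $\overline{\pi}^{-1}(W^s_{loc}(v))$ is itself a connected $(n-1)$-dimensional submanifold. Pushing forward by the finite group $W/W'$ introduces at worst finite isotropy along lower-dimensional Coxeter strata, which accounts for the orbifold structure on $\cZ$.

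\textbf{Paragraph 3 (Transversality and main obstacle).}
The weak unstable manifold $W^{wu}(v)$ is $n$-dimensional in $\cZ$, with tangent space spanned by the strong unstable horospherical directions together with the flow direction. A nonzero tangent to $W^s_{loc}(v)$ at $v$ is an infinitesimal displacement of the basepoint along $x_0$ while keeping $\xi_+$ fixed; since $p(\Sigma)$ is a section transverse to the flow, this tangent carries no flow component, and since a geodesic in $\Hyp$ is determined by its pair of ideal endpoints, a nontrivial such displacement necessarily moves the backward endpoint $\xi_-$. Consequently $T_v W^s_{loc}(v) \cap T_v W^{wu}(v) = 0$, and the equality $(n-1) + n = 2n - 1 = \dim \cZ$ yields the required transversality. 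The step I expect to be the main obstacle is the geometric assertion that the admissible infinite forward wall sequence $(x_0, x_1, \ldots)$ really contracts to a unique $\xi_+ \in \partial_\infty \A$: this is where hyperbolicity of the apartment enters in an essential way, via the uniform lower bound on dihedral angles combined with negative curvature (the analogous statement would fail in a Euclidean apartment). Once $\xi_+$ is extracted, the parametrization of $W^s_{loc}(v)$ by $x_0$ and the connectedness and smoothness statements are routine.
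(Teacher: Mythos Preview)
Your identification of $W^s_{loc}(v)$ in Paragraph~2 contains a genuine gap. The assertion that ``any unit vector based in the interior of $x_0$ and pointing to $\xi_+$ gives a geodesic realising the same forward coding'' is false in general. Two geodesics with the same forward ideal endpoint are asymptotic, but this does not force them to cross the walls of the tessellation in the same order: whenever $\gamma_v$ passes close to a codimension-$2$ stratum where two walls meet, a small displacement of the basepoint along $x_0$ (keeping $\xi_+$ fixed) can swap the order of the two crossings or cause the perturbed geodesic to visit an extra chamber. What you have actually described is the larger set $W^0(v)$ of vectors based on the same face with the same forward endpoint; $W^s_{loc}(v)$ is in general only a proper subset of it. Your ``full-measure open subset'' clause refers to admissibility of the \emph{backward} orbit and does nothing to address stability of the \emph{forward} coding.

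The paper handles precisely this point with a quantitative Borel--Cantelli argument borrowed from dispersing billiards. Writing $\delta_n(v)$ for the distance from $\gamma_v(t_n)$ to the $(n-2)$-dimensional boundary of the panels at the $n$-th crossing time $t_n$, one uses that the Liouville measure of a $\delta$-neighbourhood of that boundary is $O(\delta)$ and that $t_n \geq c_1 n - c_2$ to conclude $\inf_n \delta_n e^{c_1 n} > 0$ for Liouville-a.e.\ $v$. Combined with the exponential contraction of distances along $W^0(v)$, this shows that $W^s_{loc}(v)$ contains a neighbourhood of $v$ in $W^0(v)$; convexity of $W^s_{loc}(v)$ inside $W^0(v)$ then yields connectedness. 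Neither ingredient appears in your argument. The step you flag as the main obstacle --- extracting $\xi_+$ from the forward wall sequence --- is in fact the easier direction; the substantive difficulty, and the reason the conclusion holds only Liouville-almost everywhere, is this local stability of the coding under perturbation of the basepoint.
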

\begin{proof} Let us prove the statement on $T^1(Y)$.
There is a finite family  $\LL$ of  totally geodesic closed submanifolds of $Y$ which are the images of the panels of the tessellation. By construction, the vectors in $p(\Sigma ) $ are based on $\LL$. Since it forms a lower dimensional submanifold, the set $T^1(\LL) $ is negligible for the smooth measure $\mu _0$. Let $v$ be a vector based on $\LL$ but not belonging to $T^1 (\LL)$, and consider $W^0(v)$ the set of vectors based on the same face in  $\LL$  and such that the geodesics $\g _{\widetilde{v}} $ and $\g _{\widetilde{v}'}$ satisfy $\sup _{t \geq 0 } d(\g _{\widetilde{v}}  (t) , \g_{\widetilde{v}'}(t) )< \infty $. $W^0 (v) $ is a  connected   $(n-1)$-dimensional submanifold  of $T^1 (Y)$, transverse to the weak unstable manifold of $v$. The set  $W^s_{loc} (v)$ of vectors such that the associated sequence has the same nonnegative entries as the sequence of $v$ is a convex (hence connected) subset of $W^0(v)$. We claim that,  for Liouville almost every $v$, $W^s_{loc }(v)$ is a neighborhood of $v$ in  $W^0(v)$. In particular, it is an $(n-1)$-dimensional manifold. 

The argument is classical for dispersing billiards (\cite {Sin}): let $t_1, t_2 ,\cdots, t_n, \cdots $ the instants when $\g _v (t_n ) $ intersects $\LL$. There are constants $c_1,c_2$ such that $t_n \geq c_1 n - c_2$. Let $\dd_n (v) $ be the distance from $\g_v(t_n) $ to the set of unit vectors in $T^1 (Y)$ based on the $(n-2)$ dimensional boundaries of the faces.  If, for all positive $n$,  $v' \in W^0(v) $ satisfies $d_{T^1(Y)} (\g_{v'}(t_n+s), \g_v(t_n)) < \dd _n /3$, for some $s, |s| \leq \dd /3$, then $v' \in W^s_{loc} (v). $  Since $d_{T^1(Y)} (\g_{v'}(t_n), \g_v(t_n))  = e^{-t_n} d_{T^1(Y)} (v,v')$, it follows that $W^s_{loc}$ contains a neighborhood of $v$ in $W^0 (v) $ as soon as $\inf _n \dd_n e^{c_1n} >0$. The Liouville measure of a $\dd $ neighborhood of a codimension 1 subset  is $O(\dd ).$ By invariance of the geodesic flow and a Borel-Cantelli argument, Liouville almost every $v \in T^1(Y)$ satisfies  $\inf _n \dd_n e^{c_1n} >0.$ This shows the second part of the lemma. The first part follows by considering the successive preimages of $W^s_{loc} (v(\underline {x})).$
\end{proof}
The geodesic flow $\phi _t$ on $Z$ is coded by the suspension flow on $(\Sigma, \sigma)$, with ceiling function $L$.

We will prove in Section~\ref{sec:bressaud}:
\begin{prop}\label{prop:general}
Assume that the Bowen-Margulis measure projects on the Liouville measure. 
Then there is a function $u$ which is H\"older-continuous on each $W^s_{loc}(\ux)$ and which satisfies
$$\log Q(\ux) -(h_{\Vol}-(n-1))L(\ux) = u(\ux) - u(\sigma \ux), a.e.$$
Similary, there is a function $u'$ which is H\"older-continuous on each $W^u_{loc}(\ux)$ and which satisfies
$$\log Q(\ux) -(h_{\Vol}-(n-1))L(\ux) = u'(\ux) - u'(\sigma \ux), a.e.$$
These functions $u$ and $u'$ coincide almost everywhere.
\end{prop}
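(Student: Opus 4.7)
The plan is to exploit the identification of both measures as equilibrium states for suitable potentials on the symbolic coding $(\Sigma, \sigma)$, and then extract the cohomology equation from the coincidence of their conditional measures along local stable and unstable leaves.

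First, I would translate the hypothesis via Proposition \ref{prop:suspension}: the assumption that Bowen--Margulis measure projects onto Liouville measure says that the associated $\sigma$-invariant probability measures $\mu_{BM}'$ and $\mu_L'$ on $\Sigma$ coincide. By Corollary \ref{cor:subshift}, $\mu_{BM}'$ is the equilibrium state for the potential $\log Q - h_{\Vol} L$ with pressure zero. On the other hand, the Liouville measure on $T^{1}Y$ is the maximal entropy measure of the geodesic flow on the hyperbolic manifold $Y$, with entropy $n-1$; hence by Proposition \ref{prop:suspension} again, $\mu_L'$ is the equilibrium state for $-(n-1) L$ with pressure zero. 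The desired cohomology equation is then simply the expected Livsic--Bowen statement for the difference $\log Q - (h_{\Vol}-(n-1)) L$, except that $\Sigma$ is not a subshift of finite type.

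Second, I would construct $u$ directly from conditional measures on local stable leaves. The Bowen--Margulis conditional measure on $W^s_{loc}(\ux)$ comes from the Patterson--Sullivan measure on the boundary of $\Delta$; crossing a panel with thickness $Q+1$, the Patterson--Sullivan density transforms as $e^{-h_{\Vol} L} \cdot Q$, because mass splits among the $Q$ branches not containing the basepoint. The Liouville conditional measure on $W^s_{loc}(\ux)$, by Santal\'o's formula and the horospherical description of stable leaves in hyperbolic space, transforms as $e^{-(n-1) L}$. Define $u(\ux)$ to be the logarithm of the Radon--Nikodym derivative of the Bowen--Margulis conditional density with respect to the Liouville conditional density along a reference local stable leaf. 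The identity $\mu_{BM}' = \mu_L'$ forces
\[
\log Q(\ux) - (h_{\Vol} - (n-1)) L(\ux) \; = \; u(\ux) - u(\sigma \ux)
\]
for almost every $\ux$. H\"older continuity of $u$ along $W^s_{loc}$ follows from the exponential contraction of stable leaves in the $\mathrm{CAT}(-1)$ building: if $\ux$ and $\uy \in W^s_{loc}(\ux)$ agree in the first $k$ coordinates, their forward Birkhoff sums of $\log Q - (h_{\Vol}-(n-1))L$ differ by a geometric tail.

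Third, I would run the mirror construction on local unstable leaves to produce $u'$ satisfying the same cohomology equation, H\"older continuous on each $W^u_{loc}(\ux)$. To prove $u=u'$ almost everywhere, I would work on a local product neighborhood: on such a set the two equations together imply that $u-u'$ is simultaneously constant along stable and unstable holonomies, hence $\sigma$-invariant, hence constant by ergodicity of $\mu_{BM}'$; absorbing this constant into one of the functions yields $u=u'$.

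The principal obstacle lies in step two: because $(\Sigma,\sigma)$ is not a subshift of finite type, the classical Livsic--Bowen theorem is unavailable, and the cohomology has to be extracted pointwise from the Gibbs property expressed through conditional measures. This requires care in defining the Radon--Nikodym derivative consistently across different local stable plaques, and in handling the singular set $T^1(\LL)$ where $L$ can be zero and the coding degenerates. The H\"older regularity on leaves, rather than on all of $\Sigma$, is exactly what is needed to make the argument in Section \ref{sec:bressaud} work without a global coding by a subshift of finite type.
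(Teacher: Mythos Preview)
Your proposal is correct and follows essentially the same route as the paper: construct conformal families on local stable leaves from the Patterson--Sullivan measure and from Lebesgue measure, compare their $\sigma$-transformation laws, and read off the coboundary from the logarithm of their Radon--Nikodym derivative, with the unstable version and ergodicity giving $u=u'$. One technical point you gloss over and the paper treats explicitly (Lemma~\ref{lem:bowen}): the Patterson--Sullivan transformation law involves the Busemann function $\beta_{\zeta}(o,s^{-1}o)$ rather than $L$ itself, and these differ by a transfer function $v$; the correct $u$ is not just the log of the density ratio but that quantity shifted by $-h_{\Vol}v$ (and a locally constant term), which is also what makes the H\"older regularity along $W^s_{loc}$ go through.
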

Let us assume Proposition~\ref{prop:general} for now and prove Theorem~\ref{thm:liouville}.
\begin{proof} [Proof of Theorem~\ref{thm:liouville}]
Assume that the Bowen-Margulis measure projects on the Liouville measure. We will get a contradiction by comparing the cross ratio of functions $L$ and $\frac{\log Q}{h_{\Vol} -n+1}$ which are cohomologous almost everywhere.
Let us express the cross-ratio on the set of quadruples of points which are pairwise distinct $\partial(\A)^{4}_0$ in two ways: first the usual way using $L$, and the other way using the function $\log Q/(h_{\Vol} -n+1)$.

Let $\zeta_1, \zeta_2, \zeta_3, \zeta_4$ be Lebesgue almost every points in $\partial \A$.  
Let us denote the geodesic in $\A$ with extreme points $\zeta_i, \zeta_j$ by $\g_{i, j}$. Let us choose sequences of unit vectors $x^k_i, y^k_i$ for $k=0,1,2,\cdots$ and $i=1,2,3,4$, whose base points belong to the panels of the tessellation, and such that 
\begin{align*}
x_i, x_j \in \g_{i,j}, \;\;& \textrm{where} \;\; (i,j)=(1,4),(2,3),\\
 y_i, y_j \in \g_{i,j}, \;\;& \textrm{where} \;\; (i,j)=(1,3), (2,4),\\
\textrm{and} \;\;x^k_i \to \zeta_i,  y^k_i \to \zeta_i, &\;\;\textrm{as}\; k \to \infty.
\end{align*}
Choose $K$ large enough so that
 $x^k_i, y^k_i$ are in the same $W^s_{loc}$ for $k \geq K$ and $i=1,2$, and $x^k_i, y^k_i$ are in the same $W^u_{loc}$ for $k \geq K$ and $i=3,4$.
Define the cross-ratio
\[ [\zeta_1,\zeta_2,\zeta_3,\zeta_4] = \underset{k \to \infty}{\lim} \left\{d(y^k_1, y^k_3)+d(y^k_2,y^k_4) -d(x^k_2, x^k_3)-d(x^k_1,x^k_4) \right\},
\]
where $d(x,x')$ denotes the distance between the base points of $x,x'$.
Along $\g_{1,3}$, the distance $d(y^k_1, y^k_3)$ is the sum of length $L(\sigma^i \uz)$ where $\uz$ is the cutting sequence of $y^k_3$ up to the base point of $y^k_1$. Therefore,
\[ d(y^k_1, y^k_3) = \sum L(\sigma^i \uz) = \sum \frac{\log Q(\sigma^i \uz)}{h_{\Vol} - n + 1} +u(y^k_3)-u(y^k_1)
\]
We have analogous equations for other pairs of vectors.
As $k \to \infty$, in the formula of the cross-ratio, there are only finitely many $\log Q$-terms which do not cancel. Moreover, by Proposition~\ref{prop:general}, since $x^k_i, y^k_i$ belong to the same $W^s_{loc}$ for $i=1,2$ ($W^u_{loc}$ for $i=3,4$, respectively) as $k \to \infty$,
\[
u(x^k_i)-u(y^k_i) \to 0, \forall i.
\]
It follows that the cross-ratio $[\zeta_1,\zeta_2,\zeta_3,\zeta_4]$ takes only countably many values on a set of full measure, a contradiction.
\end{proof}




\subsection{Proof of Proposition~\ref{prop:general}} \label{sec:bressaud}

\begin{pro} \textrm{[Proposition \ref{prop:general}]}
Assume that the Bowen-Margulis measure projects on the Liouville measure. 
Then there is a function $u$ which is H\"older-continuous on each $W^s_{loc}(\ux)$ and which satisfies
$$\log Q(\ux) -(h_{\Vol}-(n-1))L(\ux) = u(\ux) - u(\sigma \ux), a.e.$$
Similary, there is a function $u'$ which is H\"older-continuous on each $W^u_{loc}(\ux)$ and which satisfies
$$\log Q(\ux) -(h_{\Vol}-(n-1))L(\ux) = u'(\ux) - u'(\sigma \ux), a.e.$$
These functions $u$ and $u'$ coincide almost everywhere.
\end{pro}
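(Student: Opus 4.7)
Under the hypothesis, the projections to $\cZ$ of the Bowen--Margulis measure and of the Liouville measure agree, and via the symbolic coding of Section~\ref{sub:coding} they correspond to one and the same shift-invariant probability measure $\mu$ on $\Sigma$. By Corollary~\ref{cor:subshift} this $\mu$ is the equilibrium state of $\phi_1:=\log Q-h_{\Vol}L$, while by Proposition~\ref{prop:suspension} applied to the geodesic flow on the hyperbolic manifold $Y$ (whose topological entropy is $n-1$) the same $\mu$ is the equilibrium state of $\phi_2:=-(n-1)L$; in particular both pressures vanish. Setting $F:=\phi_1-\phi_2=\log Q-(h_{\Vol}-(n-1))L$, the target identity is $F=u-u\circ\sigma$ almost everywhere.

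The plan is to build $u$ by a stable holonomy construction. First I would check that $F$ is H\"older on $\Sigma$ in the natural symbolic metric: $\log Q$ depends only on the zeroth coordinate, and for $L$ the CAT($-1$) argument already used in Section~\ref{sec:surface} shows that if $\ux,\uy$ agree on the central block of length $2k+1$ then $|L(\ux)-L(\uy)|\leq Ce^{-ck}$. Second, for $\uy\in W^s_{loc}(\ux)$ the iterates $\sigma^n\ux$ and $\sigma^n\uy$ contract exponentially in $n$, as this is just forward asymptoticity of the corresponding geodesic lifts in $\mathbb{H}^n$. Consequently the series
\[
\Delta_\ux(\uy):=\sum_{n=0}^{\infty}\bigl[F(\sigma^n\uy)-F(\sigma^n\ux)\bigr]
\]
converges absolutely, is H\"older in $\uy$ along $W^s_{loc}(\ux)$, and satisfies the telescoping identity $\Delta_\ux(\uy)-\Delta_{\sigma\ux}(\sigma\uy)=F(\uy)-F(\ux)$. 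Picking a measurable transversal $\Xi$ to the stable foliation and letting $\ux^{*}$ be the unique point of $W^s_{loc}(\ux)\cap\Xi$, set $u(\ux):=\Delta_{\ux^{*}}(\ux)$. This gives a function that is H\"older on every $W^s_{loc}$, and the telescoping identity translates into the statement that $F-(u-u\circ\sigma)$ is constant along each local stable leaf.

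To pass from ``constant along stable leaves'' to the a.e.\ identity $F=u-u\circ\sigma$ one invokes the dual role of $\mu$: as equilibrium state for $\phi_1$ and for $\phi_2$, its conditional measures on local stable leaves admit Gibbs densities proportional to $e^{S_n\phi_1}$ and to $e^{S_n\phi_2}$ respectively (both normalized by the same pressure $0$); comparing these two descriptions and letting $n\to\infty$ forces the leaf-constant $F-(u-u\circ\sigma)$ to vanish $\mu$-a.e. An entirely symmetric construction with past in place of future produces $u'$ H\"older on each $W^u_{loc}$ with $F=u'-u'\circ\sigma$ a.e. Then $u-u'$ is $\sigma$-invariant a.e., and since $\mu$ is ergodic (being the equilibrium state of a H\"older potential), $u-u'$ is $\mu$-a.e.\ constant; absorbing this constant into the normalization of $u'$ yields $u=u'$ a.e.

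The main obstacle is precisely the step that eliminates the leaf-constant. Because $(\Sigma,\sigma)$ is not a subshift of finite type---apartments of a hyperbolic building of dimension $\geq 2$ do not carry a globally Markov Bowen--Series coding---the classical Liv\v sic--Bowen rigidity for cohomologous H\"older potentials over SFTs is not available off the shelf. The resolution is genuinely geometric: one compares the disintegrations of the two invariant measures along the almost-everywhere sub-manifold structure of $W^s_{loc}$ supplied by the preceding lemma, Patterson--Sullivan conditionals on the Bowen--Margulis side against Lebesgue-type conditionals on horospheres on the Liouville side, and reads off the vanishing of the leaf-constant from their equality.
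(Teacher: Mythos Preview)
Your telescoping construction is a detour that does not do the real work. Building $u(\ux)=\sum_{n\ge0}\bigl[F(\sigma^n\ux)-F(\sigma^n\ux^{*})\bigr]$ only reduces $F$ to a function constant along each local stable leaf; that much is soft and holds for any H\"older $F$ with no hypothesis at all. The entire content of the proposition is the step you yourself flag as the ``main obstacle'': killing the leaf constant. Your first mechanism for this---that the conditionals of $\mu$ on stable leaves are Gibbs with densities proportional to $e^{S_n\phi_i}$---is exactly the SFT thermodynamic formalism you then concede is unavailable. Your second mechanism, the geometric comparison of Patterson--Sullivan versus Lebesgue conditionals, is the correct one, but in your write-up it is only named, not executed; and once you execute it the telescoping becomes superfluous.

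The paper carries out that comparison directly and never passes through a leaf-constant at all. For each $\ux$ it defines on $W^s_{loc}(\ux)$ a measure $m_{\ux}$ by pushing forward the Patterson--Sullivan measure $m_o$ on $\partial\Delta$ along endpoints of lifted geodesics, and computes the Jacobian of the return-and-reflect map $\widehat\sigma$ to be $q(x_1)\,e^{-h_{\Vol}(L+v-v\circ\sigma)}$, where $v$ is the H\"older transfer function coming from a Busemann computation (isolated as a lemma). The Lebesgue family $\lambda_{\ux}$ has Jacobian $e^{-(n-1)L}$. Under the hypothesis one has $\lambda_{\ux}=w\,m_{\ux}$ for a positive density $w$, and dividing the two Jacobian identities yields the cohomological equation outright, with $u=\log w-h_{\Vol}v-\log q(x_0)$. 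H\"older regularity of $\log w$ along each $W^s_{loc}$ is then read off from the infinite product $\Omega(\uz,\uy)=w(\uz)/w(\uy)$, which is the same exponentially convergent product that underlies your $\Delta_{\ux}$. So the Radon--Nikodym derivative $w$ \emph{is} the transfer function, and the proof is this Jacobian comparison; your proposal stops just short of it.
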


\begin{proof}
We first construct the conformal measure on $\Sigma^+$ from Patterson-Sullivan measure on the boundary of the building $\Delta$. 

Since $\Delta$ is a CAT(-1)-space, there is a construction of Patterson-Sullivan measure \cite{CP}, which is a family of measures $m_x$, $x \in \Delta$ such that
\[
\frac{dm_{x'}}{dm_x} (\xi) = e^{-h_{\Vol} \beta_{\xi}(x',x)},
\]
where $\xi \in \partial \Delta$ and $\beta_{\xi}$ is the Busemann function based on $\xi$. Pick the origin $o \in C$ and the Patterson-Sullivan measure $m_o$.
For given $\ux=(x_0, \cdots, x_k, \cdots)$, recall that $W^s_{loc}(\ux)$ is the set of geodesics in $\cG(\A)$ whose geodesic cutting sequence is $(x_0, \cdots, x_k, \cdots)$ at time $0, T_1, \cdots, T_k, \cdots$.
Let us define $m_{\ux}$ on $W^s_{loc}(\ux)$ : for given $B \subset W^s_{loc}(\ux)$, take the Patterson-Sullivan measure $m_o$ of the set $\partial B$ of endpoints (at time $-\infty$) of the geodesics in $\cG(\Delta)$ which projects on $B$. 

Consider the map on the set of geodesics which is the return map of the geodesic flow composed with the reflection map, which corresponds to the shift map $\s$. More precisely, let us define the map $\widehat{\sigma} : W^s_{loc}(\ux) \to W^s_{loc}(\sigma\ux)$ by 
$$v \mapsto s_{x_1} \phi_{L(p^{-1}(v))} (v),$$
where $s_{x_1}$ is the reflection with respect to the face $x_1$.
For $y \in W^s_{loc}(\ux)$, let us denote by $\zeta(y)$ the endpoint at $-\infty$ of $y$.
Therefore, for $B \subset W^s_{loc} (\sigma \ux)$, 
\[
 m_{\sigma \ux}(B) = q(x_1) \int_{B} \frac{dm_{s_{x_1}o}}{dm_o} (\zeta(y)) d\widehat{\sigma}_{*}m_{\ux}(y),
\]
since $m_{\sigma \ux}(B)$ is the Patterson-Sullivan measure $m_{s_{x_1}o}$ (on $\partial \Delta$) of $q(x_1)$ copies (branched at $x_1$) of sets of endpoints of geodesics projecting to $B$. By the property of Patterson-Sullivan measure, 
\[
\frac{dm_{s_{x_1}o}}{dm_o} (\zeta) = e^{-h_{\Vol}\beta_{\zeta}(o, s^{-1}_{x_1}o)},
\] 
for $\zeta$ the endpoint at $-\infty$ of a geodesic projecting to $B$. For all geodesics projecting to the same geodesic $y \in B$, their endpoints at $-\infty$ project to the same point $\zeta(y) \in \partial \A$.
In other words, on $W^s_{loc}(\sigma x)$,
\[ 
\frac{dm_{\sigma \ux}}{d \widehat{\sigma}_{*} m_x} (y) = q(x_1) e^{-h_{\Vol}\beta_{\zeta(y)}(o, s^{-1}_{x_1}o)}.
\]

The following lemma is classical :

\begin{lem}\label{lem:bowen} There is a H\"older continuous function $L'$ on $\Sigma$ which is essentially cohomologous to $L$ with a transfer function $v$. On each $p^{-1}(W^{s}_{loc}(\ux))$, the function $v$ is H\"older-continuous.

\end{lem}

\begin{proof} Recall that $C$ is a fundamental domain for the action of $W$ on $\mathbb {H} ^n$, and $o \in C$. For $ {\ux} \in \Sigma $, denote $b_0( {\ux}) $ the footpoint  of $p(\ux)$, $b_1(\ux)$ the footpoint of $\g _{p(\ux)} (L(\ux))$. There is an element $s \in W$ such that $s(b_1(\ux) ) = b_0 (\sigma \ux)$. Denote $\zeta (\ux) = \g _{p(\ux)} (-\infty)$ the point at $- \infty$ in $\partial \mathbb {H}^n $ for $\g _{p(\ux)}$, and $B_{\zeta(\ux)} $ the Busemann function. Then we have:
\begin{align*}
L(\ux) = B_{\zeta (\ux)} (b_0(\ux), b_1(\ux)) & = B_{\zeta(\ux) } (b_0(\ux) , o )  + B_{\zeta (\ux) } (o, s^{-1} o) + B_{\zeta (\ux) } (s^{-1} o, b_1(\ux))\\
& = B_{\zeta(\ux) } (b_0(\ux) , o )  + B_{\zeta (\ux) } (o, s^{-1} o) - B_{s \zeta (\ux) } (s b_1(\ux), o)\\
& = B_{\zeta(\ux) } (b_0(\ux) , o )  + B_{\zeta (\ux) } (o, s^{-1} o) - B_{\zeta (\sigma \ux) } (b_0 (\sigma \ux), o),
\end{align*}
where we used that $s \zeta (\ux) = \zeta (\sigma \ux)$. 
Setting $v(\uy) = B_{\zeta(\uy)}(b_0(\uy),o)$ and $L' (\ux) = B_{\zeta (\ux) } (o, s^{-1} o)$, where $s$ is defined as above, the function $L$ and $L'$ are essentially cohomologous. The function $v $ is H\"older-continuous on each $p^{-1}(W^s_{loc})(\ux)$ by the same reasoning as the one for $L$ (in the beginning of Section~\ref{sec:surface}).
\end{proof}

By Lemma~\ref{lem:bowen}, $\beta_{\zeta(\uy)}(o, s^{-1}_{x_1}o)=L(\uy)+v(\uy)-v(\sigma \uy).$ Therefore,
\[ 
\frac{dm_{\sigma \ux}}{d \widehat{\sigma}_{*} m_x} (p^{-1}(\uy)) = q(x_1) e^{-h_{\Vol}\{ L(\uy) + v(\uy) - v(\sigma \uy)\}}.
\]

On the other hand, we know that the family of Lebesgue measures  $\{ \lambda_{\ux} \}$ on $W^s_{loc}$ is the unique family of measures (up to a global constant) which satisfies
\[ 
\frac{d\lambda_{\sigma \ux} }{ d\widehat{\sigma}_{*} \lambda_{\ux}}(p^{-1}(\uy)) = e^{-(n-1) L(\uy)}
\quad \quad \quad (***)
\]
Assume that the Bowen-Margulis measure projects on the Liouville measure. 

Then there is a positive function $w$ such that $\lambda_{\ux} =w m_{\ux}$, $m_L$-almost everywhere. 
For $m_L$-almost every $\ux$, the function $w$ has the following properties: 
\begin{align*}
\int_{W^s_{loc}(\ux)} w dm_{\ux} &=\lambda_{x}(W^s_{loc}(\ux)) \quad \;\mathrm{and}\\
\frac{w(\sigma^{-1} \uy)}{w(\uy)} = q(y_1)& e^{-h_{\Vol}\{ L(\uy) +v(\uy)-v(\sigma \uy)\}} e^{(n-1)L(\uy)}, 
\end{align*}
for $\lambda_{\ux}$-almost every $\uy$.
The second property comes from rearranging the following equality :
\[
\frac{d\lambda_{\sigma \ux}}{d\widehat{\sigma}_{*}(\lambda_{\ux})} =
\frac{w}{w \circ \sigma^{-1}} \frac{dm_{\sigma \ux}}{d\widehat{\sigma}_{*}  m_{\ux}}.
\]
Let us define the function $\Omega$ for $\uy, \uz$ in $W^{u}_{loc}(x)$ ,
\begin{align*}
\Omega (\uz,\uy) &= \underset{i=1, \cdots, \infty}{\prod}  \frac{q(z_{i+1})}{q(y_{i+1})}
\frac{e^{-h_{\Vol} \{L(\sigma^{i} \uz)+v(\sigma^{i}\uz) -v(\sigma^{i+1}\uz)\}}}{e^{-h_{\Vol} \{L(\sigma^{i} \uy)+v(\sigma^{i}\uy) -v(\sigma^{i+1}\uy)\}}}
\frac{e^{(n-1) L(\sigma^iz)}}{e^{(n-1) L(\sigma^iy)}}\\
&= \frac{e^{-h_{\Vol}v(\sigma \uz)}}{e^{-h_{\Vol}v(\sigma \uy)}}
 \underset{i=1, \cdots, \infty}{\prod}  
\frac{e^{-(h_{\Vol}-(n-1))L(\sigma^{i} \uz)}}{e^{-(h_{\Vol}-(n-1)) L(\sigma^{i} \uy)}}.
\end{align*}
It follows that
$$\frac{\Omega(\uy, \uz)}{\int \Omega(\uy, \uz)dm_{\ux}(\uz)}dm_{\ux}(\uy)$$ satisfies the equation $(***)$, thus is proportional to Lebesgue measure $\lambda_{\ux}$.
Therefore,
\[
\Omega(\uz,\uy) = \frac{w(\uz)}{w(\uy)}
\]
Since $v$ and $L$ are H\"older-continuous on $\Sigma$, the function $\Omega(\uz, \uy)$ is H\"older-continuous on $W^s_{loc}(\ux)$. Thus the function $\log w$ is H\"older-continous on each $W^{s}_{loc}(\ux)$.

We showed that 
$\log Q(\ux) -(h_{\Vol}-(n-1))L(\ux) = u(\ux) - u(\sigma \ux)$, for $u=\log w -h_{\Vol} v- \log q(x_0)$, which is H\"older-continuous on each $W^s_{loc}(\ux)$.

Similarly, using reversing time, we have $\log Q(\ux) -(h_{\Vol}-(n-1))L(\ux) = u'(\ux) - u'(\sigma \ux)$, for a function $u'$, which is H\"older-continuous on each $W^u_{loc}(\ux)$. By ergodicity of Liouville measure, the functions $u$ and $u'$ coincide almost everywhere (up to a constant).
\end{proof}

{\it Acknowledgement.} We are grateful to F. Paulin and J.-F. Lafont for helpful discussions. We would also like to thank the anonymous referee for invaluable remarks. The first author was supported in part by NSF Grant DMS-0801127.

\small{\end{document}
\begin{thebibliography}{99}
\bibitem[Abr]{Abr}L. Abramov,
\emph{ On the entropy of a flow. (Russian)},
 Dokl. Akad. Nauk SSSR \textbf{128}  (1959), 873--875. 
\bibitem[Bou]{Bou}
M. Bourdon, \emph{Immeubles hyperboliques, dimension conforme et rigidit\'e de Mostow},  Geom. Funct. Anal. \textbf{7}  (1997),  no. 2, 245--268.
\bibitem[BP]{BouPaj}
M. Bourdon, H. Pajot, \emph{ Poincar\'e inequalities and quasiconformal structure on the boundary of some hyperbolic buildings},  Proc. Amer. Math. Soc.  \textbf{127}  (1999),  no. 8, 2315--2324.
\bibitem[Bow1]{Bow1}
R. Bowen, \emph{Symbolic dynamics for hyperbolic flows}, American Journal of Mathematics, \textbf{95} (1963), 429--460.
\bibitem[Bow2]{Bo3}
R. Bowen, \emph{Topological entropy for noncompact sets},  Trans. Amer. Math. Soc., \textbf{184}  (1973), 125--136.
\bibitem[BS]{BS}
R. Bowen, C. Series, \emph{Markov maps associated with Fuchsian groups},
Inst. Hautes \'Etudes Sci. Publ. Math. (1979) \textbf{50}, 153--170. 
\bibitem[Bro]{Bro}
Brooks, \emph{A relation between growth and the spectrum of the Laplacian},
Math. Z. \textbf{178} (1981), no. 4, 501--508. 
\bibitem[CP]{CP}
M. Coornaert,  Papadopoulos, \emph{Symbolic coding for the geodesic flow associated to a word hyperbolic group},  Manuscripta Math.  \textbf{109}  (2002),  no. 4, 465--492.
\bibitem[Dav]{Dav}
M. Davis, \emph{The geometry and topology of Coxeter groups},
London Mathematical Society Monographs Series, \textbf{32}, Princeton University Press, Princeton, NJ, 2008. 
\bibitem[DO]{DO}J. Dymara, D. Osajda, \emph{Boundaries of right-angled hyperbolic buildings},  Fund. Math. \textbf{197}  (2007), 123--165. 

\bibitem[GP]{GP}
D. Gaboriau, F. Paulin, \emph{Sur les immeubles hyperboliques}, Geom. Dedicata. \textbf{88} (2001) 153--197.
\bibitem[Gro]{Gro}
M. Gromov, \emph{Volume and bounded cohomology},  Inst. Hautes \'Etudes Sci. Publ. Math. No. \textbf{56} (1982), 5--99. 
\bibitem[HeP]{HeP}
S. Hersonsky, F. Paulin, \emph{On the rigidity of discrete isometry groups of negatively curved spaces},  Comment. Math. Helv.  \textbf{72}  (1997),  no. 3, 349--388. 
\bibitem[HP]{HP}
F. Haglund, F. Paulin, \emph{Construction arborescentes d'immeubles}, Math. Ann. \textbf{325} (2003), 137--164.
\bibitem[Kat]{Kat}
A. Katok, \emph{Entropy and closed geodesics},
Ergodic Theory Dynam. Systems \textbf{2} (1982), no. 3-4, 339--365.
\bibitem[Led1]{Led2}
F. Ledrappier, \emph{Principe Variationnel et syst\`emes dynamiques symboliques}, Z f\"ur W \textbf{30} (1974) 185--202.
\bibitem[Led2]{Led}
F. Ledrappier, \emph{Harmonic measures and Bowen-Margulis measures}, Israel J. of Math., \textbf{71} (1990), no. 3, 275--287.
\bibitem[LW]{LW} 
F. Ledrappier, P. Walters, \emph{A relativised variational principle for continuous transformations}, J. London Math. Soc. (2) \textbf{16} (1977), 568--576.
\bibitem[Leu1]{Leu2}
E. Leuzinger, \emph{Isoperimetric inequalities and random walks on quotients of graphs and buildings},  Math. Z.  \textbf{248}  (2004),  no. 1, 101--112. 
\bibitem[Leu2]{Leu1}
E. Leuzinger, \emph{Entropy of the geodesic flow for metric spaces and Bruhat-Tits buildings},  Adv. Geom. \textbf{6}  (2006),  no. 3, 475--491. 
\bibitem[Lim]{Lim}
S. Lim, \emph{Minimal volume entropy for graphs}, Trans. Amer. Math. Soc. \textbf{360} (2008), 5089--5100.
\bibitem[Lyo]{Lyo}
R. Lyons, \emph{Equivalence of boundary measures on covering trees of finite graphs},  Ergodic Theory Dynam. Systems  \textbf{14}  (1994),  no. 3, 575--597. 
\bibitem[Man]{Man}
A. Manning, \emph{Topological entropy for geodesic flows}, Ann. of Math. (2) \textbf{110} (1979), no. 3, 567--573. 
\bibitem[Mil]{Mil}
J. Milnor, \emph{A note on curvature and fundamental group},  J. Differential Geometry  \textbf{2}  (1968) 1--7. 
\bibitem[PP]{PP}
W. Parry, M. Pollicott, \emph{Zeta functions and the periodic orbit structure of hyperbolic dynamics}, Ast\'erisque  No. \textbf{187--188}  (1990).
\bibitem[Pic]{Pic}
J.-C. Picaud, \emph{Entropie contre exposant critique},  S\'emi. Th\'eor. Spectr. G\'eom. \textbf{24}, Univ. Grenoble I (2007).
\bibitem[Rat]{Rat}
M. Ratner,  \emph{Markov partitions for Anosov flows on $n$-dimensional manifolds},  Israel J. Math.  \textbf{15}  (1973), 92--114. 
\bibitem[Rem]{Rem}
B. R\'emy, \emph{Immeubles de Kac-Moody hyperboliques, groupes non isomorphes de m\^eme immeuble,} Geom. Dedicata \textbf{90} (2002), 29--44. 
\bibitem[Rob]{Ro}
T. Roblin,  \emph{Ergodicit\'e et \'equidistribution en courbure n\'egative,} M\'em. Soc. Math. Fr. (N.S.)  \textbf{95}  (2003).
\bibitem[San1]{San1}
L. Santal\'o, \emph{Integral geometry on surfaces of constant negative curvature}.  Duke Math. J.  \textbf{10} (1943), 687--709. 
\bibitem[San2]{San2}
L. Santal\'o, \emph{Integral geometry and geometric probability. Encyclopedia of Mathematics and its Applications}, Vol. 1. Addison-Wesley Publishing Co. 1976.
\bibitem[Ser]{S1}
C. Series, \emph{Geometrical Markov coding of geodesics on surfaces of constant negative curvature}, Ergodic
Thoery Dynam. Sysmtems \textbf{6} (1986), no.4, 601--625.
\bibitem[Sin]{Sin}
Y. Sinai, \emph{Dynamical systems with elastic reflections}, Russian Math. Surveys, \textbf{252} (1970), 137--189.
\bibitem[Vin]{V}
E. Vinberg, \emph{Hyperbolic reflection groups}, Russian Math. Surveys \textbf{40} (1985), 31--75.
\bibitem[Vod]{Vo}
A. Vdovina, \emph{Groups, periodic planes and hyperbolic buildings},  J. Group Theory \textbf{8}  (2005),  no. 6, 755--765.
\bibitem[Wal]{Wal}
P. Walters, \emph{An introduction to ergodic theory}, Graduate Tests in math., \textbf{79}, Springer-Verlag, New York, Heidelberg, Berlin, 1982.
\bibitem[Xie]{Xie}
X. Xie, \emph{Quasi-isometric rigidity of Fuchsian buildings},  Topology  \textbf{45}  (2006),  no. 1, 101--169. 
\end{thebibliography}
